\newtheorem{theorem}{Theorem}[section]
\newtheorem{corollary}[theorem]{Corollary}
\newtheorem{lemma}[theorem]{Lemma}
\theoremstyle{definition}
\newtheorem{condition}{Condition}
\theoremstyle{remark}
\newtheorem{remark}{Remark}
\begin{document}

\title[Exceptional surgeries on alternating knots]{Exceptional surgeries on alternating knots}



\author{Kazuhiro Ichihara}
\address{Department of Mathematics, 
College of Humanities and Sciences, Nihon University,
3-25-40 Sakurajosui, Setagaya-ku, Tokyo 156-8550, Japan}
\email{ichihara@math.chs.nihon-u.ac.jp}
\thanks{Ichihara is partially supported by JSPS KAKENHI Grant Number 23740061.}

\author{Hidetoshi Masai}
\address{Graduate School of Mathematical Sciences,
The University of Tokyo,
3-8-1 Komaba Meguro-ku,
Tokyo 153-8914,
Japan}
\email{masai@ms.u-tokyo.ac.jp}
\thanks{The work of Masai was partially supported by JSPS Research Fellowship for Young Scientists.}

\dedicatory{Dedicated to Professor Sadayoshi Kojima on the occasion of his 60th birthday}

\begin{abstract}
We give a complete classification of 
exceptional surgeries on hyperbolic alternating knots in the 3-sphere. 
As an appendix, we also show that 
the Montesinos knots $M (-1/2, 2/5, 1/(2q + 1))$ with $q \ge 5$ 
have no non-trivial exceptional surgeries. 
This gives the final step in 
a complete classification of exceptional surgeries on arborescent knots.
\end{abstract}

\maketitle

\setcounter{tocdepth}{1}
\tableofcontents

\section{Introduction}

The well-known Hyperbolic Dehn Surgery Theorem 
due to Thurston \cite[Theorem 5.8.2.]{Thur} says that 
each hyperbolic knot (i.e., a knot with the complement admitting a hyperbolic structure) 
admits only finitely many Dehn surgeries yielding non-hyperbolic manifolds. 
In view of this, such finitely many exceptions are called \textit{exceptional surgeries}. 


A complete classification of exceptional surgeries on hyperbolic knots in the 3-sphere remains an important and difficult challenge in both Knot Theory and 3-manifold topology. However, such a classification is known for some infinite families of knots. For example, a classification of exceptional surgeries on hyperbolic 2-bridge knots was obtained in 
\cite{BrittenhamWu2001}. Quite recently, exceptional surgeries on hyperbolic pretzel knots were also classified in \cite{Meier}. 

In this paper, we consider exceptional surgeries on 
hyperbolic alternating knots in $S^3$, 
one of the most well-known classes of knots. 
A knot in $S^3$ is called \textit{alternating} if 
it admits a diagram with alternatively arranged 
over-crossings and under-crossings running along it.

The main theorem of this paper is:

\begin{theorem}\label{MainThm}
Let $K$ be a hyperbolic alternating knot in the $3$-sphere. 
If $K$ admits a non-trivial exceptional surgery, 
then $K$ is equivalent to an arborescent knot. 
\end{theorem}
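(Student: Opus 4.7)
The plan is to combine two ingredients: the $6$-theorem of Agol and Lackenby, which guarantees that a Dehn filling along a slope of normalized length greater than $6$ yields a hyperbolic manifold, and a linear lower bound, in the spirit of Lackenby--Purcell, on the cusp area of a hyperbolic alternating link complement in terms of the twist number $t(D)$ of a reduced, twist-reduced alternating diagram $D$. Together, these force the exceptional slopes of $K$ to live in a small region of the slope plane, producing only finitely many candidates, and the count drops to zero once $t(D)$ is sufficiently large.

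First I would fix a reduced, twist-reduced alternating diagram $D$ of $K$ and analyze the maximal cusp torus. The cusp-area estimate shows that the area $A$ of this torus grows at least linearly in $t(D)$. On a cusp of area $A$, the number of slopes of length at most $6$ is $O(1/A)$; in particular, once $t(D)$ exceeds an explicit threshold $N_{0}$, the only slope of length $\le 6$ is the meridian, which corresponds to the trivial surgery. By the $6$-theorem, every other filling is hyperbolic, and hence such a $K$ admits no nontrivial exceptional surgery.

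Next I would address the finite list of hyperbolic alternating knots whose reduced alternating diagrams have $t(D) < N_{0}$. By the Tait conjectures, this list is enumerable up to any given crossing number. For each such knot, I would combine explicit tangle analysis (to decide whether the knot is arborescent) with verified computer computation (for instance, using SnapPy and routines that certify a filled manifold as hyperbolic) to check that every knot on the list admitting a nontrivial exceptional surgery is arborescent.

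The main obstacle is twofold. Quantitatively, the threshold $N_{0}$ produced by the cusp area bound is likely to be fairly large, so the finite enumeration could be substantial; deciding arborescence for a given alternating knot also requires a careful tangle decomposition of its diagram. Qualitatively, the delicate step is bridging between the analytic input (cusp area) and the desired topological conclusion (arborescence): I expect the hardest cases to be the non-arborescent alternating knots whose twist number sits just below $N_{0}$, where the $6$-theorem no longer excludes exceptional surgeries on its own and each knot must be handled individually.
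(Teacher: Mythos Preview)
Your first step is essentially the paper's Lemma~4.1: Lackenby's word-hyperbolic Dehn surgery theorem (which combines the cusp-area lower bound with the $6$-theorem) gives the explicit threshold $t(D)\ge 9$ for the vanishing of non-trivial exceptional surgeries. So far so good.

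The gap is in your second step. You speak of ``the finite list of hyperbolic alternating knots whose reduced alternating diagrams have $t(D)<N_0$'', but this set is \emph{infinite}: the twist number bounds only the number of twist regions, not the number of crossings in each region, so for every fixed $t(D)$ there are infinitely many alternating knots. Consequently there is no crossing-number cutoff at which your enumeration terminates, and a knot-by-knot SnapPy verification cannot close the argument.

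The paper resolves this by passing to finitely many \emph{augmented links}: from each of the $9$ admissible $4$-valent plane graphs with $6\le t(D)\le 8$ one inserts a tangle with an unknotted augmentation circle at every vertex, and all the infinitely many alternating knots in question arise from $1/q$-surgeries on these circles. One then runs the $6$-theorem (via verified interval arithmetic) \emph{recursively} on the multi-cusped augmented-link complements, ruling out exceptional surgeries for every member of each infinite family at once. A separate diagrammatic step (Lemma~4.2), based on the structure of simple $4$-valent plane graphs without bigons, is needed to show that any $K$ with $t(D)\le 5$, or with $t(D)\le 8$ but not arising from one of the $9$ graphs, is already arborescent; this is what actually connects the finite computation to the conclusion ``$K$ is arborescent''. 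Both of these ideas---the augmented-link reduction and the plane-graph classification---are missing from your outline and are where the real work lies.
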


The definition of arborescent knots together with other definitions and background is 
delayed until $\S$\ref{sect:Preliminaries}.

\bigskip

Recently, exceptional surgeries on 
(both alternating and non-alternating) arborescent knots have been almost classified. 
Building on these partial results,
we provide a complete classification 
of exceptional surgeries on alternating knots as a corollary of our theorem.  

\begin{corollary}\label{Cor}
Let $K$ be a hyperbolic alternating knot in $S^3$. 
Suppose that the manifold $K(r)$ obtained by Dehn surgery on $K$ along a non-trivial slope $r$ 
is non-hyperbolic for some rational number $r$. 
Then $r$ must be an integer and $K(r)$ is irreducible. 
Furthermore the following hold. 
If $K(r)$ is toroidal, then $K(r)$ is not a Seifert fibered, and $K$ is equivalent to either 
\begin{itemize}
\item
the figure-eight knot and $r=0, \pm 4$, 
\item
a two bridge knot $K_{[b_1,b_2]}$ with $|b_1|,|b_2| > 2$, 
and $r=0$ if both $b_1 , b_2$ are even, 
$r = 2 b_2$ if $b_1$ is odd and $b_2$ is even, 
\item
a twist knot $K_{[2n,\pm 2]}$ with $|n| > 1$ and $r= 0, \mp 4$,
\item
a pretzel knot $P(q_1,q_2,q_3)$ with $q_i \ne 0 , \pm 1$ for $i=1,2,3$, 
and $r=0$ if $q_1,q_2,q_3$ are all odd, 
$r=2(q_2+q_3)$ if $q_1$ is even and $q_2,q_3$ are odd. 
\end{itemize}
In the above, when $r\ne0$, then $r$ is always a boundary slope of a once punctured Klein bottle spanned by $K$. 
If $K(r)$ is small Seifert fibered, then $K(r)$ has the infinite fundamental group, and 
$K$ is equivalent to either 
\begin{itemize}
\item
the figure-eight knot and $r= \pm 1 , \pm 2 , \pm 3$, 
\item
a twist knot $K_{[2n,\pm 2]}$ with $|n| > 1$ and $r= \mp 1, \mp 2, \mp 3$.
\end{itemize}
In particular, the figure-eight knot is the only knot 
admitting 10 exceptional surgeries among hyperbolic alternating knots, 
and the others admit at most 5 exceptional surgeries. 
\end{corollary}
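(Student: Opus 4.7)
The plan is to derive the corollary by combining Theorem \ref{MainThm} with the existing body of work classifying exceptional surgeries on arborescent knots. By Theorem \ref{MainThm}, any hyperbolic alternating knot $K$ admitting a non-trivial exceptional surgery is arborescent, so the task reduces to (i) listing all arborescent alternating knots with exceptional surgeries, (ii) recording the slopes and the topology of the resulting manifolds, and (iii) checking the uniform statements about $r$ being integral, $K(r)$ being irreducible, and the small Seifert fibered quotients having infinite fundamental group.

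First I would invoke the classifications of exceptional surgeries on arborescent knots that are already available: two-bridge knots are handled by Brittenham--Wu \cite{BrittenhamWu2001}, pretzel knots by Meier \cite{Meier}, and the remaining Montesinos and arborescent cases by Wu and collaborators, with the family $M(-1/2, 2/5, 1/(2q+1))$ for $q\ge 5$ dispatched by the appendix of the present paper. Intersecting these lists with the alternating condition cuts them down significantly, using the standard fact that a Montesinos knot is alternating if and only if its rational tangle parameters can be chosen with a common sign, while every two-bridge knot is automatically alternating. After this filtering I expect to obtain exactly the four bulleted families in the toroidal list together with the figure-eight and twist knot entries in the small Seifert list.

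Next, for each remaining knot I would go through the cited source, read off the exceptional slope(s), and check whether the resulting manifold is toroidal or small Seifert fibered. The integrality of $r$ should follow from the Cyclic Surgery Theorem applied to alternating knots (which are never torus or cable knots by Menasco's theorem), and irreducibility of $K(r)$ from the resolution of the Cabling Conjecture in this setting; the assertion that each nonzero toroidal slope is the boundary slope of a once-punctured Klein bottle can be read directly from the essential surfaces constructed in the cited papers. The infinite fundamental group assertion for the small Seifert cases follows from the known list of cyclic and finite surgeries on hyperbolic knots combined with inspection of the slopes involved.

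The main obstacle is bookkeeping rather than mathematics: one must check that the union of the partial classifications covers every arborescent alternating knot without gaps, which is precisely the role the appendix plays in closing the one remaining Montesinos family. I would therefore finish by compiling, family by family, the exceptional slopes and their types, cross-checking the counts to confirm that the figure-eight knot is the unique alternating knot with ten exceptional surgeries while every other knot on the list admits at most five.
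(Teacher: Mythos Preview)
Your overall architecture matches the paper's: reduce to arborescent knots via Theorem~\ref{MainThm}, then harvest the known classifications and filter by the alternating condition. However, several of your supporting justifications are misattributed and one is genuinely wrong.

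The real gap is your argument for integrality of $r$. The Cyclic Surgery Theorem of Culler--Gordon--Luecke--Shalen only constrains slopes $r$ for which $K(r)$ has \emph{cyclic} fundamental group; it says nothing about toroidal or general Seifert fibered fillings, which make up almost all of the exceptional surgeries in question. The paper does not argue this way: it cites \cite{Ichihara2008AGT}, which proves directly (via estimates on cusp geometry for alternating knot complements) that every exceptional surgery on a hyperbolic alternating knot is integral. You could alternatively deduce integrality \emph{a posteriori} by reading off every slope in the arborescent tables after filtering for alternating knots, but that is not what you wrote, and it requires the filtering to be done first rather than last.

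Two further points where your sketch diverges from the paper and is weaker: you never explain why a toroidal $K(r)$ cannot simultaneously be Seifert fibered (the paper invokes \cite{IchiharaJong2013} for this), and your proposed route to the infinite-$\pi_1$ claim via ``the known list of cyclic and finite surgeries on hyperbolic knots'' is not available in general---that list is conjectural. The paper instead cites Delman--Roberts \cite{DelmanRoberts1999}, who prove that alternating knots satisfy strong Property~P, disposing of the finite-$\pi_1$ case uniformly. Finally, the Montesinos family $M(-1/2,2/5,1/(2q+1))$ with $q\ge 5$ is handled in the paper not by the appendix but simply by observing that these knots are non-alternating (via the Lickorish--Thistlethwaite minimality of reduced Montesinos diagrams together with Murasugi's theorem that a minimal non-alternating diagram forces a non-alternating knot); the appendix is needed only for the separate goal of finishing the arborescent classification, not for Corollary~\ref{Cor}.
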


The proof of this corollary is given in Section \ref{sec.3}. 
The last assertion immediately follows from the classification above, 
which gives an affirmative solution for alternating knots 
to the famous Gordon's conjecture: 
A hyperbolic manifold admits 10 exceptional fillings if and only if it is the figure 8 knot complement, and otherwise, it admits fewer exceptional fillings. 
It was shown in \cite{Ichihara2008AGT} based on \cite{Ichihara2008JKTR} that 
any hyperbolic alternating knot in $S^3$ has at most 10 exceptional surgeries, 
and recently, Lackenby and Meyerhoff \cite{LackenbyMeyerhoff2013} proved in general that 
any hyperbolic knot in any closed 3-manifold has at most 10 exceptional surgeries. 
On the other hand, as stated in \cite[Problem~1.77]{KirbyList}, 
Gordon conjectured that 
only the figure-eight knot attains the maximal, that is 10, 
but the methods used in the above papers could not prove this. 

\bigskip

We remark that our proof of Theorem \ref{MainThm} is computer-aided. 
In Section~\ref{sec.4}, we will give an outline of our proof, and, 
in particular, we will clarify where we used computer in the proof. 
Actually, due to \cite{lack}, 
we have only finitely many (but a huge number of) links 
so that Theorem \ref{MainThm} follows from 
the complete classification of certain types of exceptional surgeries on them. 
Thus our task is to investigate the surgeries on these finite number of links. 

In Section \ref{sec.symmetry}, 
we will discuss how to reduce the number of the links which we have to check. 
As is explained in Section 6, by computer-aided calculations, 
we have a potential procedure to rigorously verify that 
a given link admits no non-trivial exceptional surgeries. 
However applying it for all the links obtained by using \cite{lack} 
alone 
is computationally expensive.
Therefore we give a number of observations
to reduce the number of links we need to check, 
which we estimated to be in the millions.
First, we consider symmetries of the links and the other diagrammatic arguments 
in order to reduce this number. 
To further reduce the number of links and the number of components for some of the links, 
we applied some techniques and results of \cite{Wu2012}, which use essential laminations 
in the link exteriors. 
Even with these reductions, the size of the computation is outside the scope of a personal
computer. As noted above, the verification needed for each link is an involved process.
To be more specific, 
we have about 30,000 links to investigate, and
 for each link, 
we have to apply the procedure developed in \cite{HIKMOT} recursively. 
In fact, in the worst case, 
we have to apply the procedure more than 18,000 times. 
Therefore, we ran our computations on the super-computer, ``TSUBAME'', 
housed at Tokyo Institute of Technology. 
The result of all the computations verifies that none of the links 
admit unexpected non-trivial exceptional surgeries. 

In Section \ref{sec.codes}, we will explain our main code \texttt{fef.py} 
(short for \underline{f}ind \underline{e}xceptional \underline{f}illings) that rigorously
ensures the non-existence of non-trivial exceptional surgeries of certain type.
The code and the outputs of the program are downloadable from \cite{Webpage}.
We will explain the procedure of our code in detail, 
including information about computation environments used during our computation. 
Our program is essentially based on the technique developed in
\cite{MartelliPetronioRoukema}\footnote{Recently, they updated their paper and code to use our 
technique. See Version 2 of \cite{MartelliPetronioRoukema}.}.
The code for the first version of \cite{MartelliPetronioRoukema} did not account for round-off error properly.
To obtain mathematically rigorous computations, 
we improved their code using verified numerical analysis 
based on interval arithmetic. 
Some fundamentals about such methods will be given in Appendix \ref{ape.A}.
The key step to show that the links have no exceptional surgeries
is to prove the hyperbolicity of a given manifold rigorously. 
A technique to prove the hyperbolicity via computer
has been developed in \cite{HIKMOT} 
by the team containing the authors of this paper. 

Actually our computer assisted part of the proof of Theorem \ref{MainThm} can be 
adapted to provide the final step 
needed to classify all exceptional surgeries on hyperbolic arborescent knots. 
In Appendix \ref{ape.B}, applying our method of obtaining 
a complete classification of exceptional surgeries on a given hyperbolic link, 
we show that the Montesinos knots 
$M (-1/2, 2/5, 1/(2q + 1))$ with $q \ge 5$ 
have no non-trivial exceptional surgeries. 
This gives the final step in 
a complete classification of exceptional surgeries on arborescent knots.
We remark that these computations can be done without using the super-computer ``TSUBAME''.

\begin{remark}
We here remark that 
prime alternating knots are known to be all hyperbolic 
except for $(2,p)$-torus knots, 
that is, knots isotoped to the $(2,p)$-curves 
on the standardly embedded torus in $S^3$. 
Actually Menasco showed 
in \cite[Corollary 2]{Menasco1984} 
a non-split prime alternating link which is not a torus link has 
the complement admitting a complete hyperbolic structure of finite volume, 
and Murasugi showed in \cite[Theorem 3.2]{Murasugi1958} that 
the torus knots of type $(2,p)$ are only alternating knots among all torus knots 
by calculating Alexander polynomials. 
Also note that a purely geometric proof of the latter was obtained by 
Menasco and Thistlethwaite \cite[Corollary 2]{MenascoThistlethwaite1992}. 
\end{remark}

\begin{remark}
For the proof of Theorem \ref{MainThm}, we used the code named\linebreak ``fef.py" which is specially customized for alternating knots.
The code named ``fef\_gen.py" which we used in Appendix \ref{ape.B} works for any cusped hyperbolic 3-manifolds.
Both code are included the package available at \cite{Webpage}.
\end{remark}


\section{Preliminaries}\label{sect:Preliminaries}

\subsection{Dehn surgery}

By a \textit{Dehn surgery} on a knot $K$, 
we mean the following operation to create a new $3$-manifold 
from a given one and a given knot: 
first remove an open tubular neighborhood of $K$ to obtain 
the \textit{exterior} $E(K)$ of $K$, and 
glue a solid torus $V$ back via a boundary homeomorphism 
$f : \partial V \rightarrow \partial E(K)$. 
We say the isotopy class of each non-trivial unoriented simple closed curve in 
$\partial E(K)$ is a \emph{slope}. 
We pay special attention to the slope
$\gamma$ that is identified to the isotopy class of curves 
in $\partial V$ that bounds a disk in $V$. 
In this context, we call $\gamma$ the \textit{surgery slope} 
(see \cite{RolfsenBook} for further details
and background on Dehn surgery). 
When $K$ is a knot in $S^3$, 
by using the standard meridian-longitude system, 
slopes on $\partial E(K)$ are parametrized by 
$\mathbb{Q} \cup \{1/0\}$. 
For example, the meridian of $K$ corresponds to $1/0$ and the longitude to $0$. 
We thus denote by $K(r)$ the 3-manifold obtained by 
Dehn surgery on a knot $K$ along a slope corresponding to a rational number $r$. 
By the \textit{trivial} Dehn surgery on $K$ in $S^3$, 
we mean the Dehn surgery on $K$ along the meridional slope $1/0$. 
Thus, it yields $S^3$ again, which is obviously exceptional, when $K$ is hyperbolic. 
We say that a Dehn surgery on $K$ in $S^3$ is \textit{integral} 
if it is along an integral slope. 
This means that the curve representing the surgery slope 
runs longitudinally once.

We also recall a classification of exceptional surgeries. 
As a consequence of 
the famous Geometrization Conjecture, 
raised by Thurston in \cite[section 6, question 1]{Thurston1982}, 
and established by Perelman's works, \cite{Perelman1}, \cite{Perelman2}, \cite{Perelman3}, 
all closed orientable $3$-manifolds are classified as: 
reducible (i.e., containing 2-spheres not bounding 3-balls), 
toroidal (i.e., containing incompressible tori), 
Seifert fibered (i.e., foliated by circles), 
or hyperbolic (i.e., admitting a complete Riemannian metric with constant sectional curvature $-1$). 
See \cite{S} for a survey. 
Thus, exceptional surgeries are also divided into three types; 
reducible (i.e., yielding a reducible manifold), 
toroidal (i.e., yielding a toroidal manifold), or 
Seifert fibered (i.e., yielding a Seifert fibered manifold).

\subsection{Families of knots}
We here introduce some notions for knots that we use in this paper. 


A \textit{bridge index} of a knot in $S^3$ is defined as 
the minimal number of local maxima (or local minima) up to ambient isotopy. 
Thus, a knot with bridge index $2$ is called a \textit{two-bridge knot}. 
Since two-bridge knots are alternating, a natural consequence of Menasco's work 
in \cite{Menasco1984} is that a two-bridge knot is hyperbolic unless it is a $(2,p)$-torus knot.


We now recall some standard notation and terminology regarding arborescent knots. 
See \cite{Wu1996} for full details. 
By a \textit{tangle}, we mean 
a pair with a 3-ball and properly embedded 1-manifolds. 
From two arcs of rational slope 
drawn on the boundary of a pillowcase-shaped 3-ball, 
one can obtain a tangle, which is called a \textit{rational tangle}. 
A tangle obtained by 
putting rational tangles together in a horizontal way 
is called a \textit{Montesinos tangle}. 
An \textit{arborescent tangle} is then defined as 
a tangle that can be obtained by summing several Montesinos tangles together in an arbitrary order. 

Suppose that a knot $K$ in $S^3$ is obtained by closing a tangle $T$. 
If $T$ is a Montesinos tangle, then we call $K$ a \textit{Montesinos knot}, and 
if $T$ is an arborescent tangle, then we call $K$ an \textit{arborescent knot}. 

The number of rational tangles 
forming the corresponding Montesinos tangle 
is called the \textit{length} of the Montesinos knot. 
It is seen that prime Montesinos knots with length at most two are all two-bridge knots. 
Thus, they are all alternating. 
On the other hand, Montesinos knots of length at least three are generally not alternating. 

In \cite{Wu1996}, Wu divided all arborescent knots into three types: 
 \textit{type I} knots - two-bridge knots or Montesinos knots of length 3, 
 \textit{type II} knots - the union of two Montesinos tangles, each of which is formed by two rational tangles 
corresponding to $1/2$ and a non-integer, and
all the other arborescent knots are \textit{type III}.


We denote by $M(r_1,r_2,\dots,r_n)$ 
a Montesinos knot constructed from rational tangles 
corresponding to rational numbers $r_1,r_2,\dots,r_n$. 
In particular, $M(1/q_1, 1/q_2, \dots, 1/q_n)$ with 
integers $q_1, q_2, \dots, q_n$ is called a \textit{pretzel knot} of $n$-strands.


\section{Proof of Corollary \ref{Cor}}\label{sec.3}

Let $K$ be a hyperbolic alternating knot in $S^3$. 
Suppose that the surgered manifold $K(r)$ is non-hyperbolic for some rational number $r$. 
As we recall above, we see that $K(r)$ is reducible, toroidal or Seifert fibered. 

First, $r$ must be an integer by the first author in \cite[Theorem 1.1]{Ichihara2008AGT}. 

Also, $K(r)$ must be irreducible by Menasco and Thistlethwaite \cite[Corollary 1.1]{MenascoThistlethwaite1992}. 

Suppose that $K(r)$ is toroidal. 
Then $K(r)$ is not Seifert fibered, 
shown by the first author with Jong \cite{IchiharaJong2013}. 
Moreover, as a consequence of the argument used 
in the classification of toroidal surgeries on alternating knots 
obtained by Patton \cite{Patton1995} and 
Boyer and Zhang \cite[Lemma 3.1]{BoyerZhang1997}, 
we see that $K$ and $r$ is a pair 
listed in the statement of Corollary~\ref{Cor}. 
Alternatively, this observation also follows from our Theorem \ref{MainThm}, 
together with the classifications of 
toroidal surgeries on two-bridge knots and other arborescent knots 
obtained by Brittenham and Wu \cite{BrittenhamWu2001}
and Wu \cite{Wu2011a}, \cite{Wu2011b}. 

Suppose that $K(r)$ is Seifert fibered. 
As noted above, by \cite{IchiharaJong2013}, $K(r)$ must be small Seifert fibered. 
Also, as shown in \cite{DelmanRoberts1999}, $K(r)$ has infinite fundamental group. 
Moreover, by Theorem \ref{MainThm}, $K$ is an arborescent knot. 
Any hyperbolic arborescent knot of type II or type III cannot 
admit a small Seifert fibered surgery by Wu in \cite{Wu1996}, \cite{Wu2011b}. 
Thus, $K$ must be an arborescent knot of type I, and so $K$ is either
a two-bridge knot or a Montesinos knot of length 3. 

If $K$ is a two-bridge knot, then $K$ is either 
the figure-eight knot and $r= \pm 1 , \pm 2 , \pm 3$ or 
a twist knot $K_{[2n,\pm 2]}$ with $|n| > 1$ and $r= \mp 1, \mp 2, \mp 3$ 
as claimed in the corollary by the result of Brittenham and Wu \cite{BrittenhamWu2001}. 

For Montesinos knots of length 3, 
except the particular family of Montesinos knots $M (-1/2, 2/5, 1/(2q + 1))$ with $q \ge 5$, 
Meier obtained a complete classification of exceptional surgeries in \cite{Meier}. 
Due to the classification, we see that all the Montesinos knots of length three admitting  small Seifert fibered surgeries are non-alternating as follows. 
Each of the knots can be checked that 
it admits reduced Montesinos diagrams which is non-alternating. 
Then the diagram is a minimal diagram, 
since if a Montesinos link admits 
an $n$-crossing reduced Montesinos diagram, 
then it cannot be projected with fewer than $n$ crossings, 
as shown in \cite[Theorem 10]{LickorichThistlethwaite1988}. 
However, a non-alternating projection of a prime alternating link 
cannot be minimal \cite[Theorem B]{Murasugi1987}, 
and so, the knots considered above are non-alternating. 
Finally we see that the Montesinos knots $M (-1/2, 2/5, 1/(2q + 1))$ with $q \ge 5$ are non-alternating in the same way. 
This completes the proof of Corollary \ref{Cor}. 
\qed

\bigskip

In Appendix \ref{ape.B}, 
we show that the Montesinos knots $M (-1/2, 2/5, 1/(2q + 1))$ with $q \ge 5$ 
actually have no non-trivial exceptional surgeries.

\section{Proof of Theorem~\ref{MainThm}}\label{sec.4}

Let $K$ be a hyperbolic alternating knot in the $3$-sphere. 
Suppose that $K$ admits an exceptional surgery, i.e., 
suppose that the surgered manifold $K(r)$ is non-hyperbolic for some integer $r$. 

First the following lemma, essentially due to Lackenby in \cite[Theorem~5.1]{lack}, 
shows that $K$ is not ``sufficiently complicated''. 

\begin{lemma}\label{lem1}
If a hyperbolic alternating knot $K$ has 
a connected prime alternating diagram $D$ satisfying $t(D) \ge 9$, 
then $K$ admits no non-trivial exceptional surgeries. 
\end{lemma}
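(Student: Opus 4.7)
The plan is to deduce the lemma essentially as a direct consequence of Lackenby's Theorem 5.1 in \cite{lack} combined with the $6$-theorem of Agol and Lackenby. Lackenby's theorem provides, for the exterior of a hyperbolic alternating link presented by a connected prime alternating diagram $D$, an explicit lower bound on the length of any non-meridional slope on the maximal horospherical cusp torus in terms of the twist number $t(D)$. The qualitative content is that as $t(D)$ grows, every slope other than the meridian becomes geometrically long, and the threshold needed for our application is modest.

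First I would recall the precise form of Lackenby's estimate: up to explicit universal constants, every slope distinct from the meridian on the maximal cusp has length bounded below by a quantity that increases linearly (or close to linearly) in $t(D)$. The bookkeeping step is then to verify that the hypothesis $t(D)\ge 9$ pushes this lower bound strictly above $6$, which is the threshold demanded by the $6$-theorem. Since by earlier work (cited in the discussion preceding the lemma) any exceptional surgery slope on an alternating knot in $S^{3}$ must be integral, it is enough to control integer slopes, and these are in particular non-meridional for any non-trivial surgery.

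Second, I would apply the $6$-theorem of Agol and Lackenby: if the surgery slope has length strictly greater than $6$ on the maximal cusp, then the Dehn filled manifold is hyperbolic (together with Perelman's Geometrization to upgrade the conclusion as needed). Combined with the first step, this forces every non-trivial surgery slope $r$ to produce a hyperbolic manifold $K(r)$, so no exceptional surgery exists except the trivial one at the meridian. This gives precisely the conclusion of Lemma~\ref{lem1}.

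The only real obstacle is the arithmetic matching between Lackenby's inequality and the required cut-off $6$: one needs to confirm that the constants in his bound together with the value $t(D)\ge 9$ genuinely imply all non-meridional slopes exceed length $6$, with no slack left for the finitely many short slopes that the $6$-theorem permits. All remaining ingredients (hyperbolicity of $K$ itself, integrality of exceptional slopes, primality and connectivity of $D$) are already in place by hypothesis or by results recalled earlier in the paper, so beyond this quantitative check there is no new geometric input.
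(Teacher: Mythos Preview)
Your proposal is correct and essentially matches the paper's argument: the paper simply cites Lackenby's Theorem~5.1 in \cite{lack} as already giving the conclusion that every non-trivial surgery is hyperbolike when $t(D)\ge 9$, and then invokes Perelman's Geometrization to upgrade ``hyperbolike'' to ``hyperbolic''. Your decomposition into a slope-length estimate plus the $6$-theorem is precisely the internal mechanism of Lackenby's Theorem~5.1, so the two approaches coincide; the detour through integrality of exceptional slopes is harmless but unnecessary, since Lackenby's bound already controls every non-meridional slope.
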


In fact, Lackenby showed in \cite[Theorem 5.1]{lack} 
that the knots satisfying the assumption above have no ``non-hyperbolike'' surgeries. 
Then the Perelman's affirmative solution to the Geometrization Conjecture 
guarantees that ``non-hyperbolike'' is equivalent to  non-hyperbolic, 
i.e., exceptional in this context. 

Here we recall terminology used in the lemma above. 
Let $D$ be a connected alternating diagram of a knot in $S^3$, 
which we view as a $4$-valent graph embedded in $S^2$, 
equipped with ``under-over" crossing information.
Then $D$ is called \textit{prime} 
if each simple closed curve in $S^2$ intersecting $D$ transversely 
in two points divides $S^2$ into two discs, 
one of which contains no crossings of $D$. 
The \textit{twist number} of the diagram $D$, 
denoted by $t(D)$, is defined as 
the number of \textit{twists}, which are either maximal connected collections of bigon regions in $D$ arranged in a row 
or isolated crossings adjacent to no bigon regions.

In the case where $t(D) \le 8$, we first have the following: 

\begin{lemma}\label{lem42}
If a hyperbolic alternating knot $K$ in $S^3$ has 
a connected prime alternating diagram $D$ satisfying $t(D) \le 8$, 
then either $K$ is an arborescent knot or 
$K$ has a connected prime alternating diagram $D$ satisfying $6 \le t(D) \le 8$, 
and is obtained from one of the 9 plane graphs 
illustrated in Figure \ref{fig.graphs} by substituting one of the 4 tangles 
illustrated in Figure \ref{fig.tangles} to each of the fat vertices in the graphs, 
and performing twisting on all the augmented circles. 
\end{lemma}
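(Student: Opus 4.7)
The plan is to reduce the classification to a finite enumeration of connected prime $4$-valent plane graphs with at most $8$ vertices. Given a prime alternating diagram $D$ of $K$ with $t(D)\le 8$, I would first collapse each twist region to a single ``fat'' vertex, obtaining a connected $4$-valent plane graph $G$ with exactly $t(D)$ vertices. Because $D$ is prime, $G$ inherits a primeness condition: any simple closed curve in $S^2$ meeting $G$ transversely in two edges bounds a disc on one side containing no vertex of $G$. Conversely, from $G$ together with a choice of tangle at each fat vertex one recovers $D$; the integer ``twist'' parameter that gets inserted at each fat vertex corresponds exactly to a Rolfsen twist along an unknotted circle encircling that twist region, which is the augmented circle referred to in the statement.

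The next step is to enumerate, up to planar isomorphism, all connected prime $4$-valent plane graphs on $v$ vertices for $v\le 8$. For small $v$ this list is very short, and for $v\in\{6,7,8\}$ it is still finite and can be produced either by hand or by a standard plane-graph generator; the primeness restriction is especially efficient at cutting the count down.

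The heart of the proof is to separate those $G$ for which every resulting alternating knot is arborescent from the remaining ``exceptional'' ones. A graph $G$ produces an arborescent knot exactly when its tangle decomposition is tree-like, meaning that $G$ may be cut iteratively along pairs of edges into rational sub-tangles arranged in a Montesinos/arborescent pattern. For $t(D)\le 5$ this is automatic, so those cases collapse into the arborescent conclusion of the lemma. For $t(D)\in\{6,7,8\}$, a direct inspection of the enumerated list shows that all but $9$ graphs admit such a tree decomposition; the remaining $9$ are precisely those in Figure \ref{fig.graphs}. Finally, for each of these $9$ graphs I would examine, locally at each fat vertex, which rational tangles can be substituted while keeping the resulting diagram prime, alternating, and genuinely contributing to the twist count. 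A short local case analysis distinguishes exactly the $4$ tangles of Figure \ref{fig.tangles}.

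The main obstacle is the combinatorial bookkeeping in the middle steps: producing a complete list of prime $4$-valent plane graphs with up to $8$ vertices, and deciding rigorously which of them are arborescent. The verification itself is finite and elementary, but a precise formulation of the arborescent-realization condition is needed to ensure that no borderline graph is misclassified; this is where I would expect the proof to require the most care.
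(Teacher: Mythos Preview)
Your framework is right---collapse each twist region to a fat vertex of a $4$-valent plane graph $G$, then separate the arborescent cases from the rest---but you have not identified the mechanism that makes both the arborescence test and the enumeration tractable, and without it the middle steps remain vague. The paper's criterion is not an unspecified ``tree-like tangle decomposition'' but a concrete reduction: if $G$ has a complementary bigon, collapse it to a single fat vertex. Under this collapse the two adjacent twist regions merge into one Montesinos tangle, so the original diagram is recovered from the smaller graph by inserting rational-or-Montesinos (hence arborescent) tangles at its vertices; iterating, one terminates at a bigon-free graph, and if that graph has a single vertex the knot is arborescent. The combinatorial input (cited to Brinkmann et~al.) is that every $4$-valent plane graph on at most $5$ vertices has a bigon. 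This is precisely what justifies your ``$t(D)\le 5$ is automatic,'' which you assert without argument.

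The same observation replaces your proposed enumeration of all prime $4$-valent plane graphs on $\le 8$ vertices followed by a case-by-case arborescence check. One only needs the \emph{bigon-free} such graphs: there is exactly one on $6$ vertices ($G_6$), none on $7$, and exactly one on $8$ ($G_8^s$). A non-arborescent diagram with $t(D)=7$ must then arise from $G_6$ with a single bigon inserted (unique up to symmetry since $G_6$ is vertex-transitive, giving $G_7$), and one with $t(D)=8$ from either $G_8^s$ or from $G_7$ with a bigon inserted (six ways up to the symmetry of $G_7$). That yields the nine graphs directly, with no separate arborescence test on a longer list. As a minor point, the four tangles of Figure~\ref{fig.tangles} are not the outcome of a local primeness analysis; they are simply the parametrization of an arbitrary twist region by the connection pattern of the two strands together with the placement of the augmented circle.
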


Here we mean by an \textit{augmented circle} 
an unknotted component which encircles a crossing or a pair of parallel two strands in a given diagram.
Also we say that a knot is obtained by \textit{twisting on an augmented circle} of a link 
if it is obtained by performing $1/q$-surgery on the component ($q \in \mathbb{Z} \setminus \{ 0 \}$). 

\begin{figure}[htb]
\includegraphics[bb= 0 0 1482 1108 , width=\textwidth]{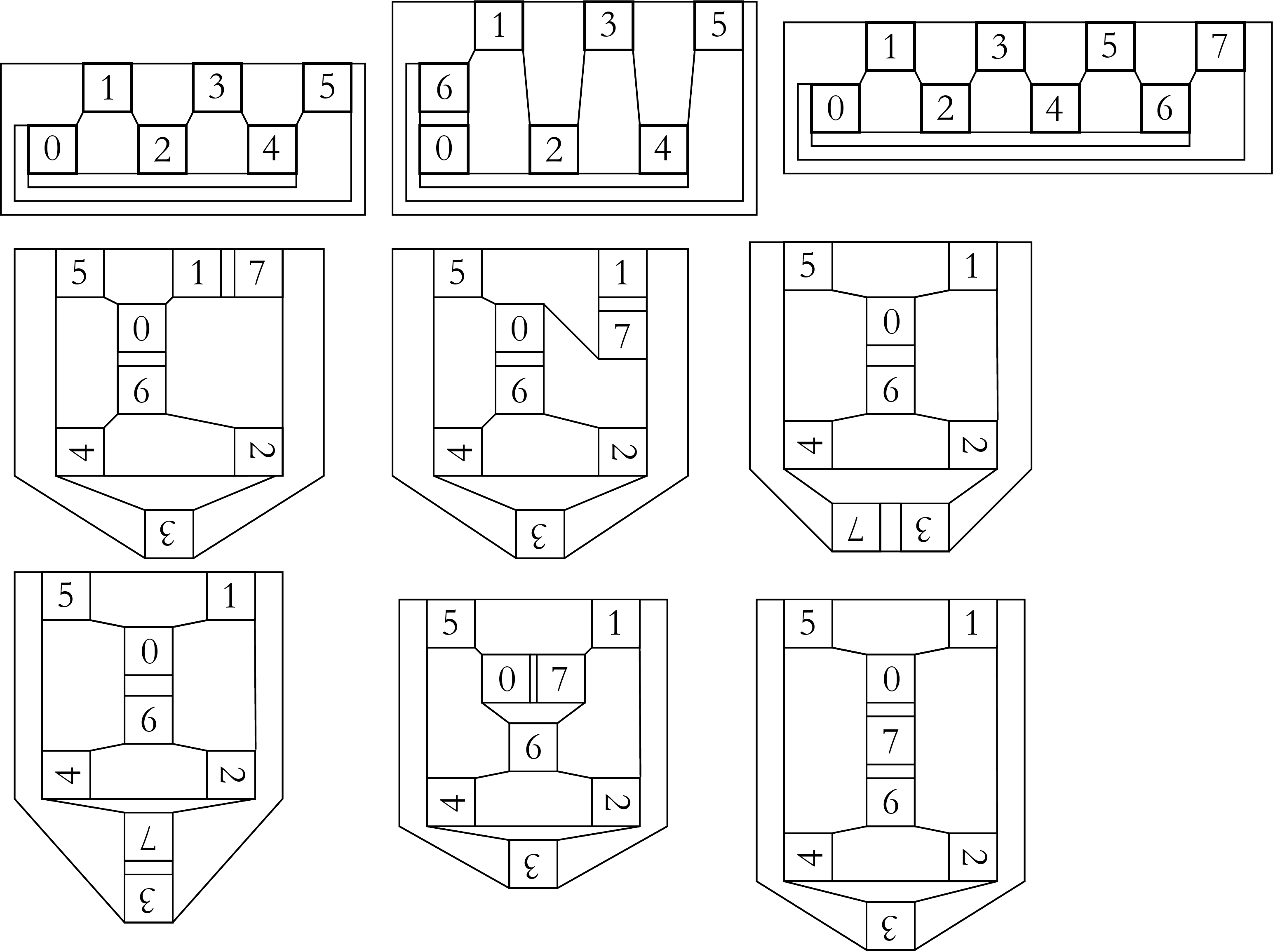}
\caption{9 plane graphs.}\label{fig.graphs}
\bigskip
\includegraphics[bb = 0 0 514 107 , scale = 0.6]{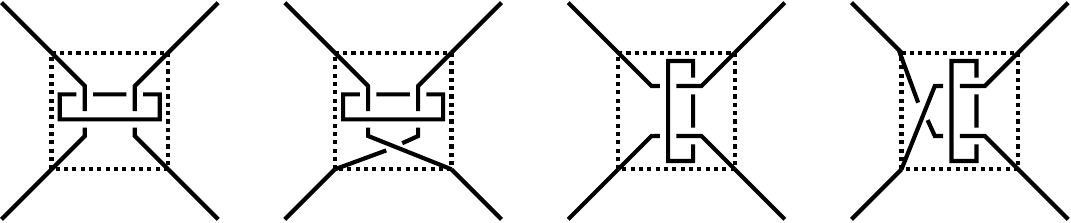}
\caption{4 tangles.}\label{fig.tangles}
\end{figure}

Note that among the graphs in Figure \ref{fig.graphs}, 
the top 3 graphs that will be named $G_6, G_7, G_8^s$ are simple and 3-connected. 
Here a graph is said to be \textit{simple} if it does not have any multi edge or self loop, and 
\textit{3-connected} if there does not exist a set of $2$ vertices whose removal is disconnected.
 
\begin{proof}[Proof of Lemma \ref{lem42}]
Let $K$ be a hyperbolic alternating knot in $S^3$, and $D$ a connected prime alternating diagram of $K$. 
Suppose that $K$ is not an arborescent knot and $t(D) \le 8$. 

As demonstrated in \cite[Section 5]{lack}, 
$D$ is obtained from some regular 4-valent plane graph with $t(D)$ vertices by replacing all of its vertices with twists. 
It is equivalent to say that $D$ is obtained from 
some regular 4-valent plane graph by substituting one of the 4 tangles 
illustrated in Figure \ref{fig.tangles} to each of the fat vertices in the graphs, 
and performing twisting on all the augmented circles. 
Now it suffices to show that, to obtain $D$, 
we only consider the 9 plane graphs depicted in Figure \ref{fig.graphs}. 

First we show that $t(D) \ge 6$. 
Suppose for a contrary that $t(D) \le 5$. 
Then $D$ is obtained from some regular 4-valent plane graph with at most 5 vertices. 
It is well-known that such a plane graph always has a complementary bigon. 
See \cite{Brinkmann_etal2005} for example. 
By collapsing a bigon to a `fat' vertex, we have a new regular 4-valent plane graph with fewer vertices. 
Then the original diagram $D$ is obtained by replacing all of its vertices with rational tangles or Montesinos tangles. 
Repeating this procedure, we have a regular 4-valent plane graph with a single vertex, 
from which we reconstruct the original diagram 
by replacing the vertex with an arborescent tangle. 
This means that the original $D$ must represent an arborescent knot, 
contradicting the assumption that $K$ is not an arborescent knot. 

Next suppose that $t(D) = 6$. 
If $D$ has a complementary bigon on the projection plane, 
then, in the same way as above, it is shown that $D$ must represent an arborescent knot, contradicting the assumption. 
Thus $D$ can admit no complementary bigons. 
Again, for example by \cite{Brinkmann_etal2005}, 
it is known that there is exactly one regular 4-valent plane graph, say $G_6$, with 6 vertices without complementary bigons, 
which is depicted at the left of the top row in Figure \ref{fig.graphs}. 

Next suppose that $t(D) = 7$. 
Again, for example by \cite{Brinkmann_etal2005}, 
it is known that there is no regular 4-valent plane graph with 7 vertices without complementary bigons. 
This means that, under the assumption that $D$ does not represent an arborescent knot, 
$D$ is obtained from $G_6$ by replacing a vertex with a vertical or a horizontal bigon. 
Since $G_6$ is homogeneous, 
i.e. 
the graph automorphism group acts transitively on the set of vertices,
it suffices to consider only one graph, say $G_7$, 
which is depicted at the middle of the top row in Figure \ref{fig.graphs}. 

Finally suppose that $t(D) = 8$. 
Again, for example by \cite{Brinkmann_etal2005}, 
it is known that there is exactly one regular 4-valent plane graph with 8 vertices without complementary bigons, 
say $G_8^s$, which is depicted at the right of the top row in Figure \ref{fig.graphs}. 
The other possibility is that $D$ is obtained from $G_6$ 
by twice repetition of replacing a vertex with a vertical or a horizontal bigon. 
As mentioned above, $G_7$ is unique up to symmetry.
Hence for this case we only need to add bigon to $G_7$.
By the symmetry of $G_7$ depicted in Figure \ref{fig.addbigon},
we see that there are 6 distinct ways to add bigons.

\begin{figure}[htb]
\includegraphics[bb = 0 0 1055 507 , scale = 0.3]{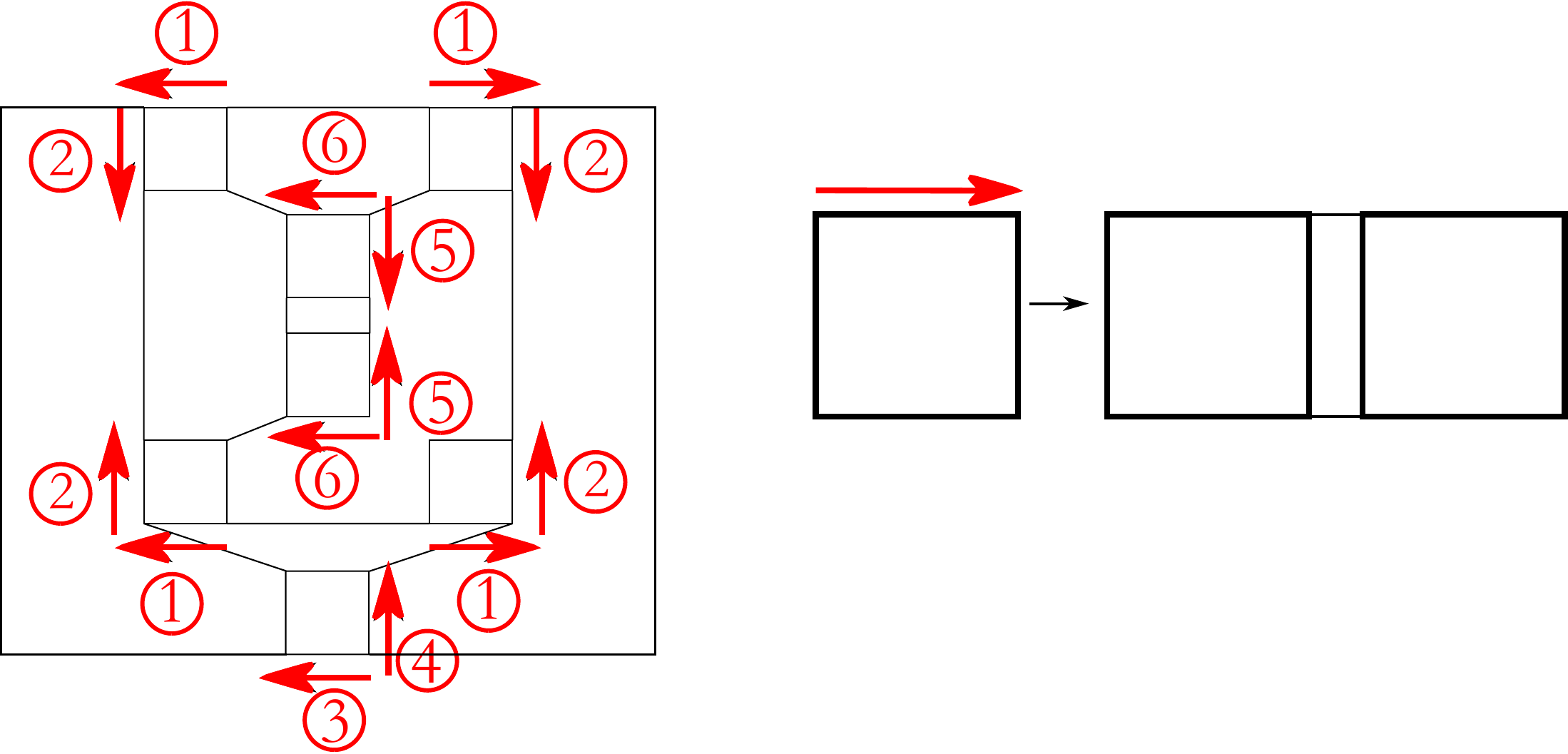}
\caption{(left) 6 places and directions (up to symmetry) to add a bigon, (right) the meaning of arrows.}\label{fig.addbigon}
\end{figure}

Those 6 graphs obtained by adding bigons to $G_7$, say $G_8^1 , \dots, G_8^6$, are
depicted at the middle and the bottom rows in Figure \ref{fig.graphs}. 

This completes the proof of the lemma. 
\end{proof}

The number of the links obtained above is naively estimated as $4^6 + 4^7 + 7 \cdot 4^8 = 479232$. 
The next lemma efficiently reduces the number of links we have to consider. 

\begin{lemma}\label{lem43}
Let $K$ be a hyperbolic alternating non-arborescent knot admitting a diagram $D$ such that $6 \leq t(D) \leq 8$. 
Suppose that $K$ admits a non-trivial exceptional surgery. 
Then there are 30404 hyperbolic links with augmented circles, 
which are constructed by substituting one of the 4 tangles in Figure \ref{fig.tangles} 
to each of the fat vertices of one of the 9 plane graphs in Figure \ref{fig.graphs}, 
such that 
$K$ is obtained from one of the links by performing twisting on the augmented circles. 
\end{lemma}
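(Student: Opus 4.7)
The plan is to start with Lemma \ref{lem42}, which gives a naive enumeration of $4^6+4^7+7\cdot 4^8=479232$ diagrams, and then pare this list down in three stages: by symmetry of the underlying plane graphs, by discarding links that need not be considered (arborescent or non-hyperbolic), and by appealing to results in \cite{Wu2012} to merge links whose exceptional surgeries can be read off from a common sublink with fewer augmented components.

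First I would fix, for each of the nine graphs $G_6,G_7,G_8^s,G_8^1,\ldots,G_8^6$, the combinatorial automorphism group acting on the set of fat vertices (as a subgroup of the symmetric group on the vertices, respecting the planar embedding up to reflection). The number of distinct labelings by the $4$ tangles of Figure \ref{fig.tangles} is then given by a Burnside/orbit count. The four tangles split into types that behave differently under vertical vs.\ horizontal reflection (the two single-crossing tangles get swapped by a quarter turn at a vertex, while the two bigon tangles are preserved), so one also has to incorporate the local vertex symmetry that twisting on the augmented circle creates: two of the four labelings at a vertex are actually interchanged when the augmented circle is $1/q$-surgered for the two signs of $q$. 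This is the bookkeeping step that converts the naive $4^{t(D)}$ count into a substantially smaller orbit count.

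Next I would remove from each orbit any representative whose associated diagram can be seen, purely diagrammatically, to describe an arborescent knot (the target $K$ is assumed non-arborescent by Theorem \ref{MainThm}, or equivalently by the hypothesis of Lemma \ref{lem43}), to be non-prime, or to become non-alternating after the augmented-circle twists are absorbed. In addition, I would discard the orbits whose associated augmented link is not hyperbolic: here I would run SnapPy (certified via \cite{HIKMOT}) on each candidate link, which is a finite computation that either verifies hyperbolicity or exhibits an essential torus/sphere and thereby removes the link from consideration via the sufficient-conditions used in Section \ref{sec.4}.

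Finally, following the philosophy of \cite{Wu2012}, I would use essential laminations in the link exteriors to identify cases where an augmented circle can be deleted without changing the exceptional-surgery content of the companion knot: this collapses many orbits corresponding to different placements of augmented circles on the same underlying $4$-valent graph into a single link with fewer components. Running this reduction on the output of the previous stage should yield the asserted number $30404$. The main obstacle, I expect, is not any single case but the bookkeeping: one must show that the orbit-counting under the graph automorphisms is carried out consistently with the diagrammatic equivalences (including mirror symmetry and the interchange of crossing signs under twisting), and that the Wu-type reductions are applied in a way that preserves the property ``$K$ admits a non-trivial exceptional surgery.'' The final count $30404$ is then a verifiable integer obtained from the enumeration, rather than a closed-form expression, and the correctness of the bound is ultimately guaranteed by the combination of the automorphism bookkeeping and the verified hyperbolicity/lamination checks.
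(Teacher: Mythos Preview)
Your outline is in the right spirit—symmetry reductions on the nine plane graphs, a lamination argument drawn from \cite{Wu2012}, and a computer enumeration—but two of the paper's actual reductions are either missing or mis-described, and without them the count cannot come out to $30404$.

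First, you omit the single largest cut: the requirement that, after substituting tangles at the fat vertices, the resulting diagram have exactly one knot component (Condition~\ref{cond.knot}). Most $\{0,1,2,3\}$-sequences produce a diagram whose non-augmented strands close up into several components, and these are discarded outright since $K$ is a knot. Together with the ``no parallel augmented circles'' condition (Condition~\ref{cond.paraaug}, which you also do not mention and which guarantees the twist number does not drop), this accounts for the bulk of the reduction from $479232$. The graph-automorphism groups are small—$G_8^1$ has no nontrivial symmetry at all, and even $G_8^s$ gives only a factor of about $16$—so a Burnside count by itself is nowhere near sufficient.

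Second, your description of the \cite{Wu2012} step is not what the paper does. The paper does not use laminations to ``delete an augmented circle without changing the exceptional-surgery content'' or to merge distinct orbits. Rather, Lemma~\ref{cor.Wu} shows that when the two strands through an augmented circle $A$ are \emph{anti-parallel}, any twist on $A$ other than a single full-twist yields a knot with no exceptional surgeries (or one of smaller twist number, already covered). Hence one may \emph{pre-fill} such an $A$ with slope $\pm 1$; this reduces the number of cusps of each link (critical for the running time of the later \texttt{fef.py} check) and, as a side effect, discards some links whose twist number drops after the $\pm 1$ fill. The paper does not separately test the augmented links for hyperbolicity or arborescence at this stage; those are not among the filters that produce the number $30404$.
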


A proof of this lemma is given in the next section, which is computer-aided. 
The detailed explanation of the key piece of our code \texttt{twistLink.py}'s 
used in the proof is included.
We actually have 9 files; 
\texttt{twistLink6.py}, \texttt{twistLink7.py}, 
\texttt{twistLink8\_1.py}, $\dots$ , \texttt{twistLink8\_7.py}, one corresponds to a graph
in Figure \ref{fig.graphs},
and as a set we call them \texttt{twistLink.py}'s. 

\begin{lemma}\label{lem44}
Let $K$ be a knot obtained from a Dehn surgery on $L$, one of the 30404 augment links in $S^3$ obtained in Lemma \ref{lem43}, 
such that the surgery corresponds to twisting along the unknotted components of $L$.
If $K$ is alternating, then $K$ is hyperbolic and admits no non-trivial exceptional surgeries.
\end{lemma}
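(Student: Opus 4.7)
The plan is to reduce Lemma~\ref{lem44} to a finite, rigorous computer verification performed on each of the 30404 augmented links $L$ produced in Lemma~\ref{lem43}. Write $L = K_0 \cup C_1 \cup \cdots \cup C_n$, where $K_0$ is the knot component of the underlying diagram and $C_1, \ldots, C_n$ are the augmenting circles. Filling each $C_i$ along $1/q_i$ with $q_i \in \mathbb{Z}\setminus\{0\}$ converts $(S^3, L)$ into $(S^3, K)$, and Dehn surgery on $K$ along a slope $s \in \mathbb{Q}\cup\{1/0\}$ is the same as the multi-filling of $E(L)$ along $(s, 1/q_1, \ldots, 1/q_n)$. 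In this language the lemma is equivalent to the assertion that whenever the multi-filling $(s, 1/q_1, \ldots, 1/q_n)$ produces a non-hyperbolic closed $3$-manifold and the resulting knot $K$ is alternating, the slope $s$ must be the meridian of $K_0$. In particular, leaving the slope on $K_0$ free then forces $E(K)$ to be hyperbolic, so $K$ is hyperbolic, and the no-exceptional-surgery conclusion is automatic.

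For each $L$, I would first certify hyperbolicity of $E(L)$ by the interval-arithmetic procedure of \cite{HIKMOT}. Then, using the code \texttt{fef.py} described in Section~\ref{sec.codes} (an improvement of the algorithm of \cite{MartelliPetronioRoukema} in which floating-point steps are replaced by verified interval arithmetic), I would produce a rigorous, provably complete list $\mathcal{E}(L)$ of exceptional multi-slopes on $L$. Thurston's Hyperbolic Dehn Surgery Theorem guarantees that $\mathcal{E}(L)$ is finite, and \texttt{fef.py} realises the enumeration in a verified way: for each candidate slope, HIKMOT either certifies hyperbolicity of the filled manifold, or the slope is recorded as potentially exceptional and the procedure is applied recursively on a further filling. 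Finally I would inspect each element of $\mathcal{E}(L)$ whose $C_i$-coordinates have the form $1/q_i$ with $q_i \in \mathbb{Z}\setminus\{0\}$ and whose $K_0$-coordinate $s$ is not the meridian, verifying in every case that the corresponding knot $K$ fails to be alternating. Since $\mathcal{E}(L)$ is finite this last inspection is a finite combinatorial check, and in typical cases the non-alternating-ness can be read off the diagram directly.

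The main obstacle is not conceptual but computational, together with the need for every arithmetic step to remain mathematically rigorous. As noted in the introduction, for a single link the recursive invocation of HIKMOT inside \texttt{fef.py} may be called more than $18{,}000$ times in the worst case, and across all 30404 links the cumulative computation is far beyond a personal computer, forcing the use of the supercomputer TSUBAME. A closely related subtlety is that interval arithmetic only gives a \emph{sufficient} condition for hyperbolicity, so a slope that fails the test cannot immediately be declared exceptional; instead it must be re-triangulated or further filled and the procedure recursively re-entered. Ensuring that this recursion terminates with a verified verdict uniformly over all 30404 links, and that no candidate slope is silently lost in the process, is the delicate point on which the correctness of the proof ultimately rests.
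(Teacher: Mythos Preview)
Your overall strategy—reduce the lemma to a rigorous computer check on each of the 30404 links via the 6-Theorem and HIKMOT inside \texttt{fef.py}—matches the paper's, but there is a genuine gap in how you organise the enumeration. You propose first to compute the complete list $\mathcal{E}(L)$ of \emph{all} exceptional multi-slopes on $L$ and only afterwards to filter for tuples $(s,1/q_1,\ldots,1/q_n)$ giving alternating knots. The claim that Thurston's Hyperbolic Dehn Surgery Theorem forces $\mathcal{E}(L)$ to be finite is false for a manifold with more than one cusp: Thurston only guarantees that avoiding a finite set of slopes on \emph{each} cusp yields hyperbolic fillings, so the exceptional set lies in a finite union of slabs $\{s_i\in E_i\}$ and is typically infinite. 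Concretely, filling an augmenting circle $C_i$ along its meridian $1/0$ simply deletes $C_i$; iterating over all the $C_i$ leaves the bare knot component $K_0$, which for these links has very few crossings and often a non-hyperbolic exterior, so every slope on the $K_0$-cusp is then exceptional. Thus ``enumerate $\mathcal{E}(L)$, then filter'' is not a finite procedure, and the recursion you describe cannot terminate with a finite list in this generality.

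The paper sidesteps this by building Condition~\ref{cond.alt} into \texttt{fef.py} from the outset: at each recursive step the code fills an augmenting-circle cusp only along a short slope of the form $\pm 1/p$ with $p\ne 0$ and with the sign already fixed so that the eventual knot is alternating (see Algorithm~\ref{fef.py}). With this restriction every intermediate filling is again hyperbolic, the recursion tree is finite a priori, and the output is simply that no filling satisfying Condition~\ref{cond.alt} is exceptional—so there is no residual list to inspect and your proposed final step of verifying that surviving candidates are non-alternating never arises. Your argument becomes correct once this restriction is imposed up front rather than as a post-hoc filter. A minor further difference: the paper establishes hyperbolicity of $K$ directly from Menasco's theorem (a reduced prime alternating diagram with $t(D)\ge 6$ cannot represent a $(2,p)$-torus knot), rather than indirectly via hyperbolicity of its Dehn fillings as you sketch.
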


This lemma is proved by super-computer calculations, 
mainly by applying the program named \texttt{fef.py}. 
\S 6 is devoted to give detailed explanations of the code \texttt{fef.py}.

Now we are in the position to prove our main result. 

\begin{proof}[Proof of Theorem \ref{MainThm}]
Let $K$ be a hyperbolic alternating knot in $S^3$. 
Suppose that $K$ admits a non-trivial exceptional surgery. 
We show that $K$ is equivalent to an arborescent knot.

Since $K$ is hyperbolic, $K$ is prime by \cite[Corollary 2.2]{Thurston1982}. 
Let $D$ be an alternating diagram of the knot $K$. 
Then $D$ is connected since $K$ is a knot (not a link). 
Since $K$ is prime, the diagram $D$ is also prime by \cite[Theorem (b)]{Menasco1984}. 

Now we consider the twist number of the diagram $D$. 
By Lemma \ref{lem1}, if $t(D) \ge 9$, then $K$ admits no non-trivial exceptional surgeries.
Then, by combining Lemmas \ref{lem42}, \ref{lem43}, \ref{lem44}, 
any hyperbolic alternating non-arborescent knot in $S^3$ admits no non-trivial exceptional surgeries.
\end{proof}

\section{Reducing the number of augmented links and components}\label{sec.symmetry}

In this section, we give a proof of Lemma \ref{lem43}. 
Throughout the section, assume $K$ is a hyperbolic alternating non-arborescent knot $K$ 
with a diagram $D$ of twist number $t(D)$ satisfying $6\leq t(D)\leq 8$ admitting a non-trivial exceptional surgery. 
Then we will show that there are 30404 hyperbolic links with augmented circles such that 
$K$ is obtained from one of the links by performing twisting on the augmented circles. 

The outline of this section is as follows. 
By Lemma \ref{lem42}, each of the links to be considered is obtained from one of the 9 plane graphs 
illustrated in Figures \ref{fig.graphs} by substituting one of the 4 tangles 
illustrated in Figure \ref{fig.tangles} to each of the fat vertices in the graphs, 
and performing twisting on all the augmented circles. 
In \S \ref{subsec.settings}, we describe a method to encode and enumerate these links as sequences of elements in $\{0,1,2,3\}$.
In \S \ref{subsec.conditions}, we explain and enforce conditions on the sequences in order 
to reduce the number of links we need to investigate. 
To further reduce computation time 
needed to prove Lemma \ref{lem44}, we will give a condition 
to reduce the number of components of the links so obtained. 
Our key ingredient is Lemma \ref{cor.Wu} based on the study 
of genuine laminations which remain genuine after any non-trivial Dehn surgery. 
This method extends the result of Wu in \cite{Wu2012}. 

Together with considerations of our restrictions, 
we implemented our procedure as a set of files \texttt{twistLink.py}'s.
All files used and data of outputs are available at \cite{Webpage}. 
In \S \ref{subsec.codes}, we will explain these files.
Note that our complete program has two parts. This part, used to prove Lemma \ref{lem43}, generates triangulation files of SnapPea. These files are then analyzed in the proof of Lemma \ref{lem44}, which will be explained in the next section.

\subsection{Settings}\label{subsec.settings}

We want to obtain the links with augmented circles such that 
any hyperbolic alternating non-arborescent knot 
with diagram $D$ of twist number $t(D)$ satisfying $6\leq t(D)\leq 8$ which admits a non-trivial exceptional surgery 
is obtained from one of the links by performing twisting on the augmented circles. 
By Lemma \ref{lem42}, such links are obtained from one of the 9 graphs $G_6, G_7, G_8^s, G_8^1 , \dots, G_8^6$ in Figure \ref{fig.graphs}. 
Note that each square with a figure in it is a vertex. 
We will call the square with $i$ in it the $i$-th square.

We first explain how we relate such a link to a sequence of $\{0,1,2,3\}$ of length $l$, where $l=6$, $7$, or $8$ depending on the graph.  

Let $\{a_i\}_{i=0}^{l-1}$ be one of such sequences.
Then we fill the $i$-th square with a tangle according to the correspondence
which is depicted in Figure \ref{fig.fill_tangle}. 
Namely, we fill $i$-th square with one of the two string tangle with an augmented circle such that the two strands connect 
\begin{itemize}
\item (nw,sw) and (ne,se) respectively if $a_i =0$, 
\item (nw,ne) and (sw,se) respectively if $a_i =2$, or
\item (nw,se) and (ne,sw) respectively if $a_i =1,\mbox{ or } 3$,
\end{itemize}
and the augmented circle is
\begin{itemize}
\item horizontal if $a_i\in\{0,1\}$, or
\item vertical if $a_i\in\{2,3\}$.
\end{itemize}
Here nw corresponds to the north west corner of the square and we define ne, sw, and se similarly.
Note that the orientation that determines nw, ne, sw, and se is determined by the orientation of the figure in it.
We remark that there is an ambiguity of the sign of crossings when $a_i \equiv 1 \mbox{ mod } 2$. 
It will be explained in Remark \ref{rmk.sing_crossing} how to choose either of them.

\begin{figure}[htb]
\includegraphics[bb= 0 0 689 123 , scale = 0.5]{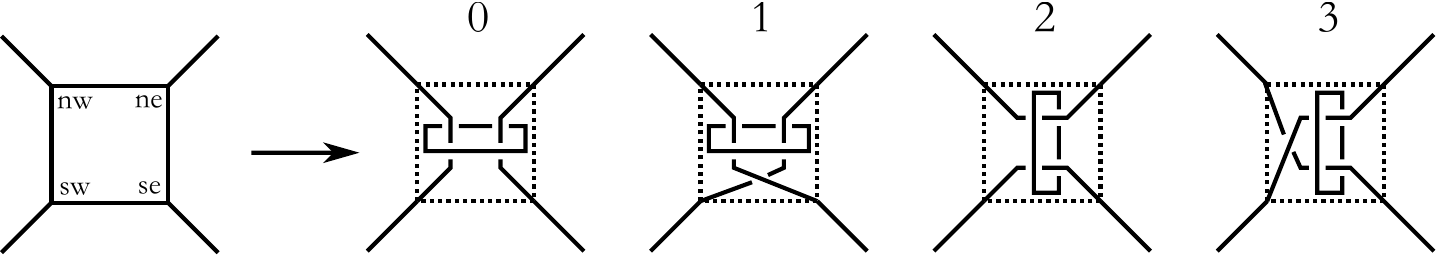}
\caption{Fill tangle.}\label{fig.fill_tangle}
\end{figure}

We drew in Figure 5 
the link that corresponds to the sequence $021213$ as an example.

\begin{figure}[htb]
\includegraphics[width=.8\textwidth]{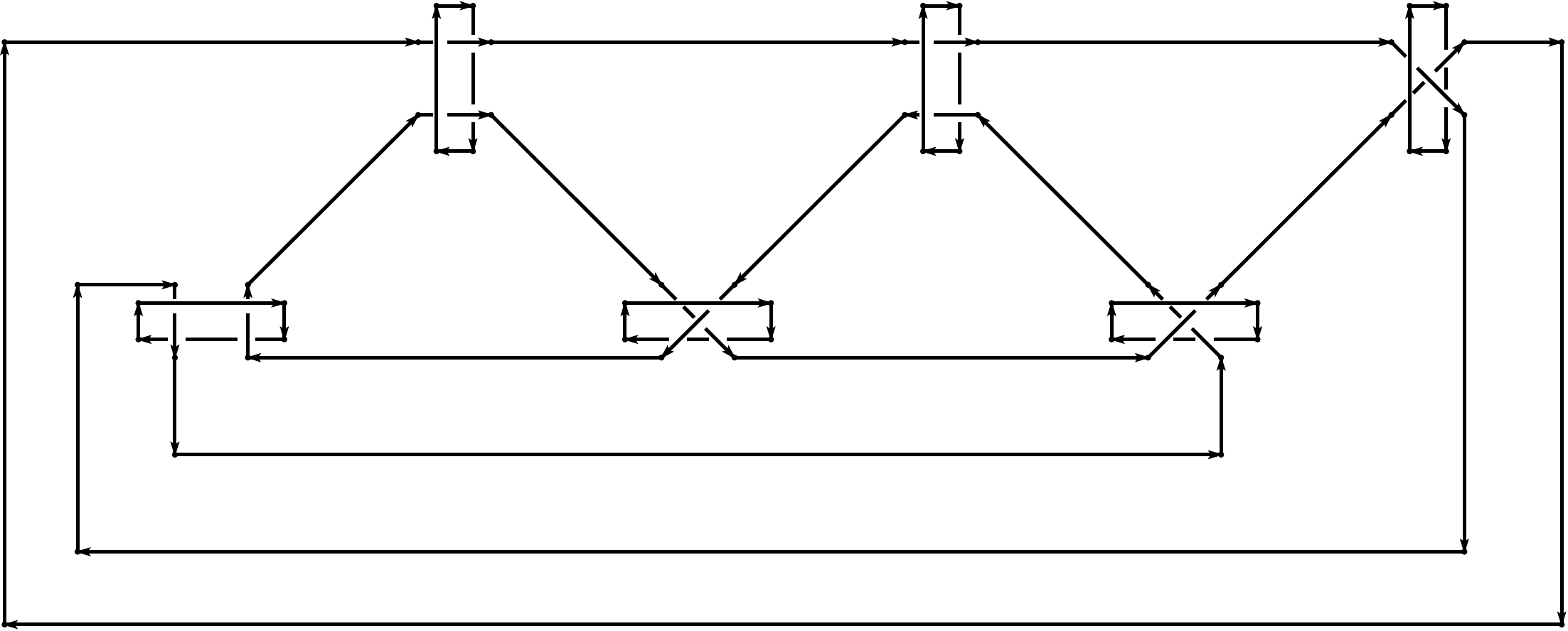}
\label{fig.example6twist}
\caption{The link corresponding to 021213.}
\end{figure}

Since performing surgery on an augmented circle with slope $-1/p$ (resp. $1/p$) corresponds 
adding positive (resp. negative) $p$ full twist to the knot components running through the augmented circle, 
by varying the signs and $p$, we can enumerate all links with diagrams we must consider. 
However, first we enumerate the augmented links by associating each link to a sequence $\{a_i\}_{i=0}^{l-1}$,
where $l$ is the number of augmented circles.  We define a {\em knot component} as a component of the link which is not an augmented circle.

\subsection{Conditions}\label{subsec.conditions}

We now describe conditions which will reduce the number of sequences we need to consider.

\subsubsection{Alternating knots and Twist number}\label{sect:conditions}

Since we are only interested in {\em alternating knots}, we have some constraints.
First, we only need to deal with sequences whose corresponding links have one connected knot component, 
for otherwise after twisting we get a link rather than a knot. 

\begin{condition}\label{cond.knot}
We only consider sequences, each of whose related link has one connected knot component.
\end{condition}

Given an alternating diagram the mirror image of the diagram will also be alternating.
Therefore after fixing the sign of the surgery slopes along the augmented circles,
we may assume that the sign on the $0$-th component is negative. 
This reduces the number of cases we need to consider by a factor of two.
We thus obtained the following conditions on slopes.

\begin{condition}\label{cond.alt}
For the link which is related to a sequence $\{a_i\}_{i=0}^{l-1}$, 
we consider the following conditions on surgery slopes;

\begin{itemize}
\item the slope of the augmented circle in 0-th square is $-1/p_0$ for some $p_0>0$,
\item the slopes of the other augmented circles are of type $1/p_i$ with $p_i\not= 0$, and 
	their signs are determined so that the resulting knot is alternating.
\end{itemize}
\end{condition}

We will only consider surgeries along slopes satisfying Condition \ref{cond.alt}.

\begin{remark}\label{rmk.sing_crossing}
There was an ambiguity of the sign of crossings when $a_i \equiv 1 \mbox{ mod } 2$. 
We choose the sign of such crossings 
so that we only need to look at slopes $1/p$ or $-1/p$ with $p$ ranging all positive integers. 
\end{remark}

Thus we generate a list of links with information of 
the number of augmented circles on which we perform surgery with negative slope, i.e. $-1/p$'s with positive $p$. 
This number is equal to the number of positive twists of the resulting alternating knots. 
For each link in the list we have generated, the $0$-th component is the knot component, 
$1$-st to $i$-th components are the augmented circles which will be surgered with negative slopes, 
and the rest will be surgered with positive slopes. 

\bigskip

For the case of 7 and 8 twists, equivalently, the case of length 7 and 8 sequences, 
not only requiring knot component to be connected, 
we also require that, after twisting, the twist number does not decrease. 
For example, in the length 7 case, we require either $a_0\in\{2,3\}$ or $a_6\in\{2,3\}$. 
For otherwise, it can readily be seen that any resulting knot after twisting has at most $6$ twists.
We have similar conditions for the length 8 cases. 
This occurs if there are \textit{parallel} augmented circles. 
Here augmented circles are said to be parallel if they are mutually isotopic in the complement of knot component. 
Thus we have the following condition. 

\begin{condition}\label{cond.paraaug}
The link which is related to a sequence $\{a_i\}_{i=0}^{l-1}$ has no pair of parallel augmented circles.
\end{condition}

\subsubsection{Symmetry}\label{subsect:symmetry}

Next we use symmetries of the plane graph $G_6, G_7, G_8^s,\allowbreak G_8^1 , \dots, G_8^6$ to reduce the number of sequences
to consider. 

We first discuss the graph $G_6$, which corresponds to the 6 twists case. For the remainder of this section,
we will denote a symmetry by its permutation on the set of tangle regions and when necessary add or subtract a number of twists. For example,
the symmetries of the graph $G_6$ are as follows:

\begin{figure}[htb]
\includegraphics[bb = 0 0 426 180 , scale = 0.3]{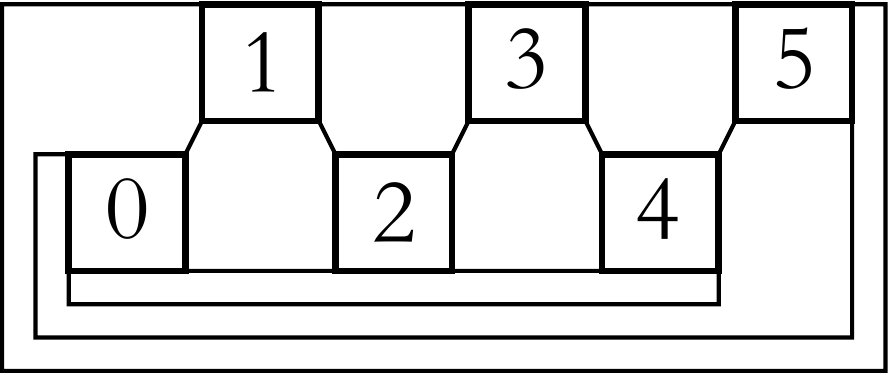}
\caption{The regular 4-valent simple plane graph with 6 vertices.}
\label{fig.6-twist}
\end{figure}

\begin{itemize}
\item $a_0a_1a_2a_3a_4a_5 \mapsto a_1a_2a_3a_4a_5a_0$,
\item $a_0a_1a_2a_3a_4a_5  \mapsto a_5a_4a_3a_2a_1a_0$.
\end{itemize} 

These symmetries are good enough to reduce the number of sequences in the list and are easy to implement. 
We implement the above procedure as \texttt{twistLink6.py} and by running it, we get a list with 185 links.

\bigskip

Next we consider the graph $G_7$ (Figure \ref{fig.7twist}). 

\begin{figure}[htb]
\includegraphics[bb = 0 0 426 252 , scale = 0.3]{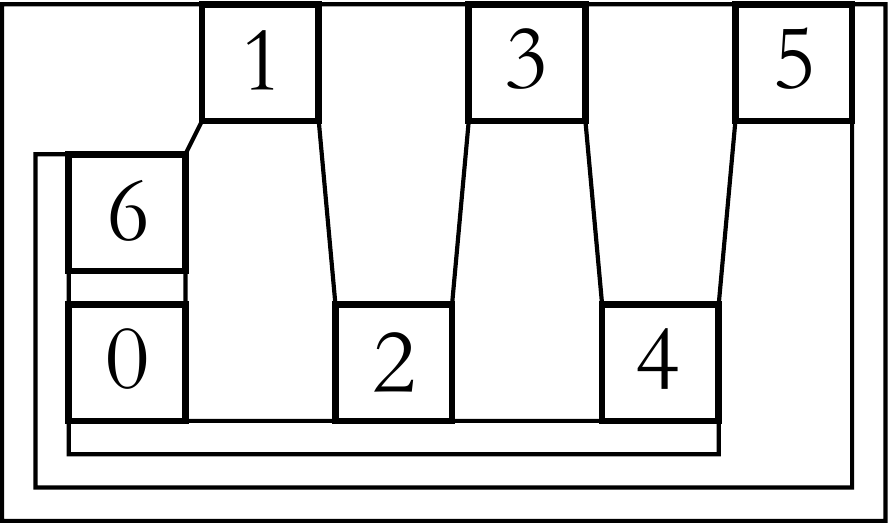}
\caption{The graph $G_7$ for 7 twists.}\label{fig.7twist}
\end{figure}

We will use the following symmetries. 
Here each element should be in $\{0,1,2,3\}$ and hence when we add $2$, it will be modulo 4.
Note that the symmetries we used here are the vertical bilateral symmetry and a $\pi$-rotation,
see Figure \ref{fig.7twist_sym}.
\begin{figure}[htb]
\includegraphics[bb= 0 0 442 370 , scale = 0.3]{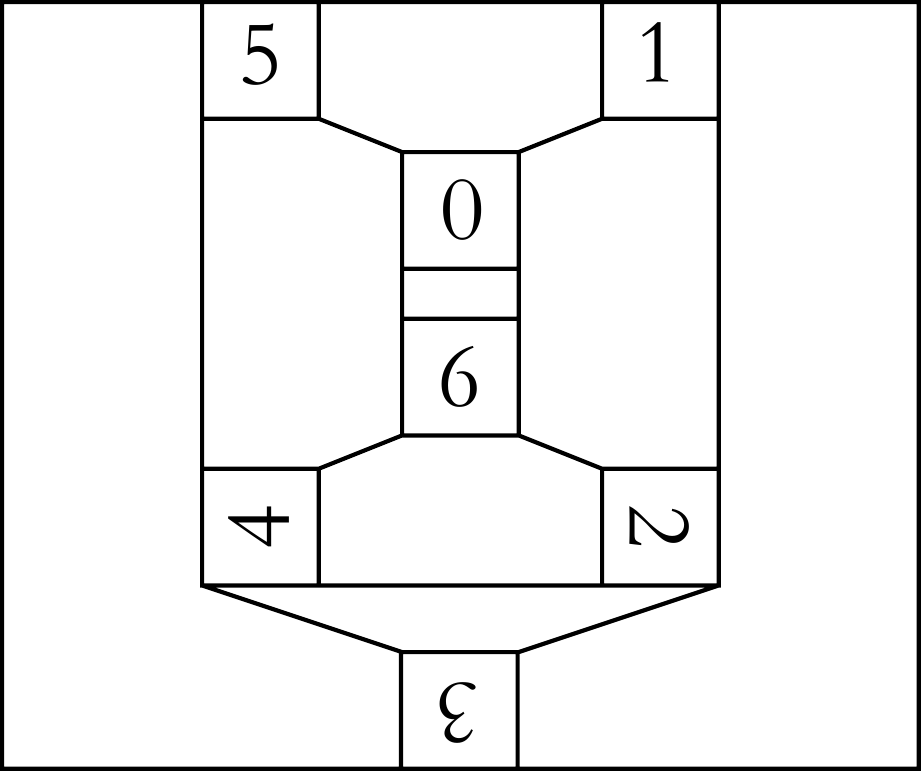}
\caption{$G_7$ has two bilateral symmetries.}\label{fig.7twist_sym}
\end{figure}

\begin{itemize}
\item $a_0a_1a_2a_3a_4a_5a_6 \mapsto a_6(a_2+2)(a_1+2)a_3(a_5+2)(a_4+2)a_0$,
\item $a_0a_1a_2a_3a_4a_5a_6  \mapsto a_6(a_4 + 2)(a_5 + 2)a_3(a_1 + 2)(a_2 + 2)a_0$.

\end{itemize} 

\bigskip

Finally we consider the graphs with 8 vertices. 
As shown in Lemma \ref{lem42}, we need to consider two types of graphs;
The unique simple plane graph $G_8^s$ (see Figure \ref{fig.8simple}), and $G_8^i$'s.
See Figure \ref{fig.g81}, \ref{fig.g82}, \ref{fig.g83}, \ref{fig.g84}, \ref{fig.g85}, and \ref{fig.g86} for pictures and symmetries of $G_8^i$'s.

\begin{figure}[htb]
\includegraphics[bb = 0 0 570 180 , scale = 0.3]{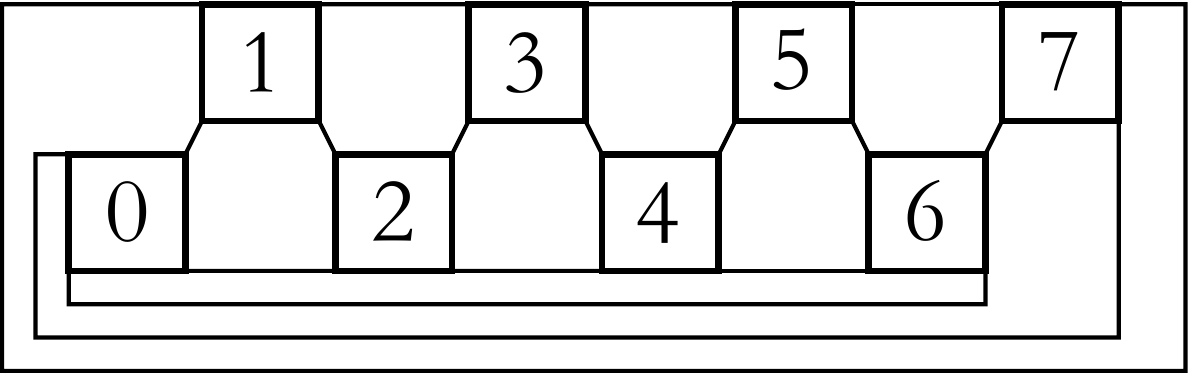}
\caption{The regular 4-valent simple planar graph with 8 vertices.}\label{fig.8simple}
\label{fig.8-twist}
\end{figure}
\begin{figure}[htb]
\includegraphics[bb = 0 0 362 362 , scale = 0.3]{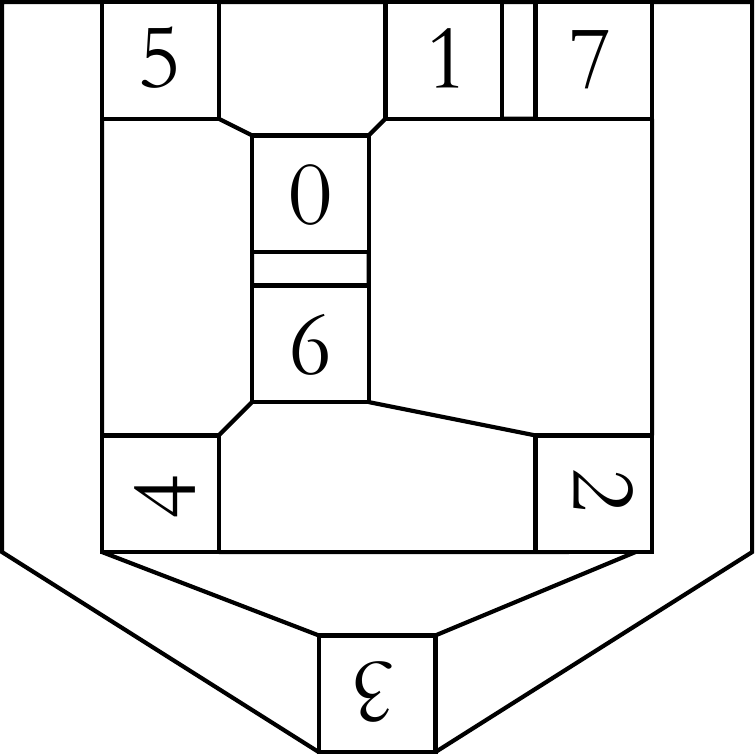}
\caption{$G_8^1$.}\label{fig.g81}
\end{figure}
\begin{figure}[htb]
\includegraphics[bb = 0 0 786 362 , scale = 0.3]{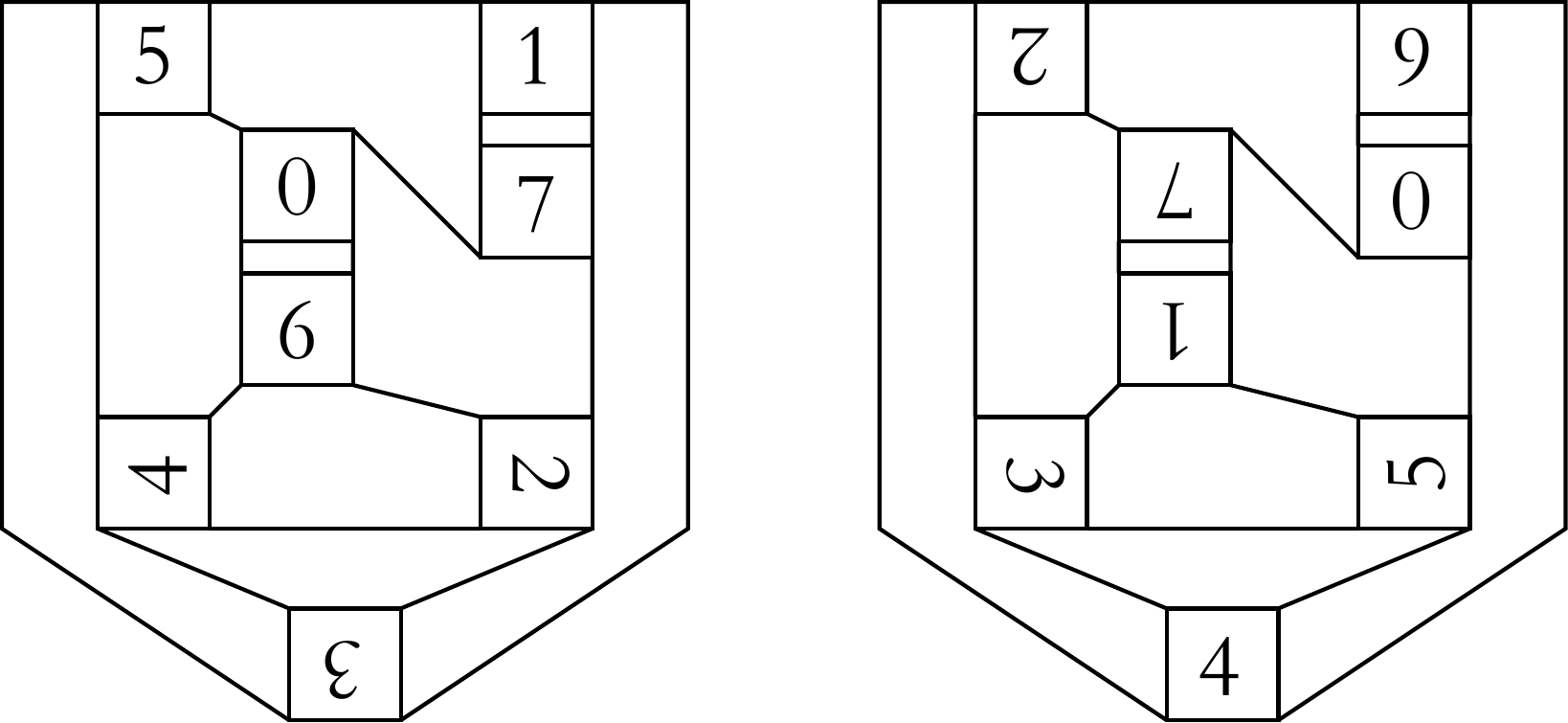}
\caption{$G^2_8$ (left) and the image under the action of its symmetry (right).}
\label{fig.g82}
\end{figure}
\begin{figure}[htb]
\includegraphics[bb = 0 0 706 362 , scale = 0.3]{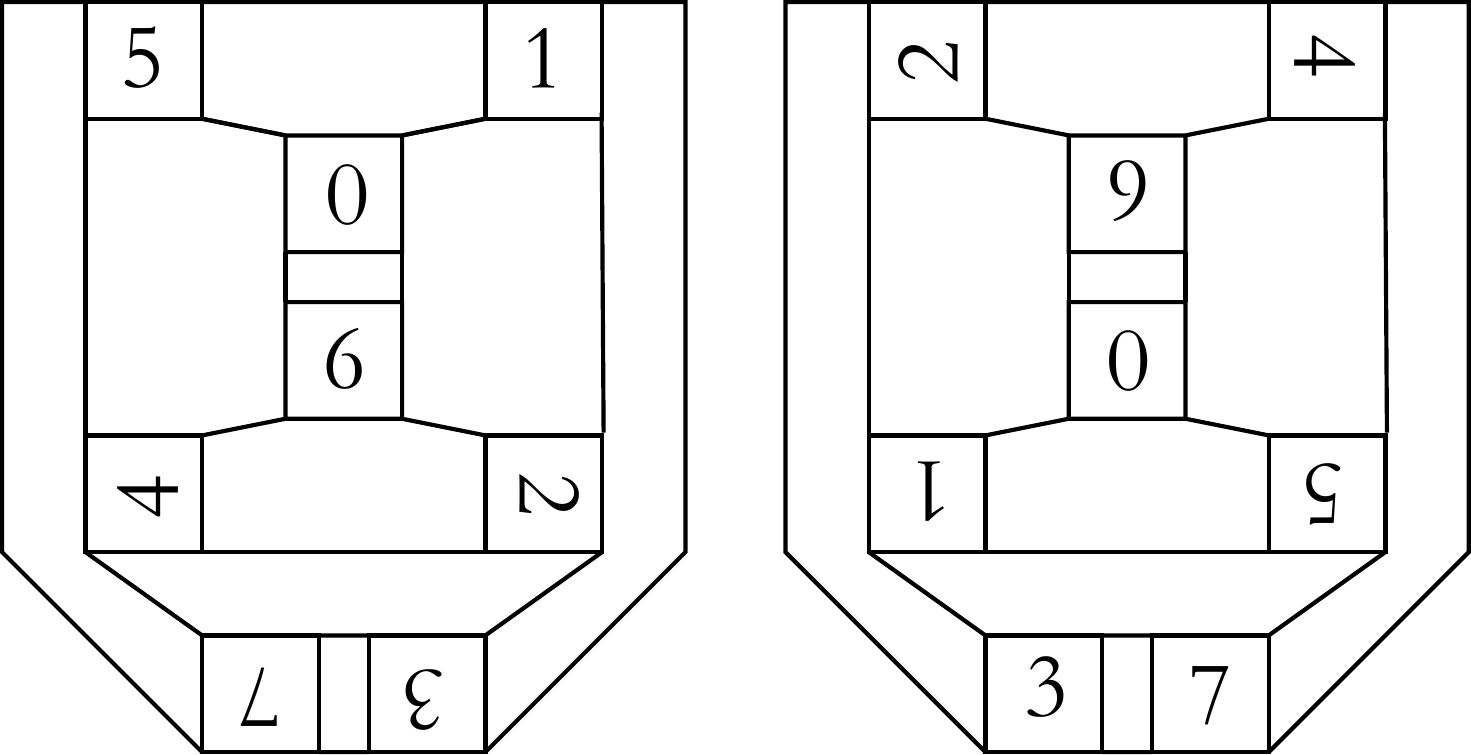}
\caption{$G_8^3$ (left) and the image under the action of its symmetry (right).}\label{fig.g83}
\end{figure}
\begin{figure}[htb]
\includegraphics[bb = 0 0 1066 410 , scale = 0.3]{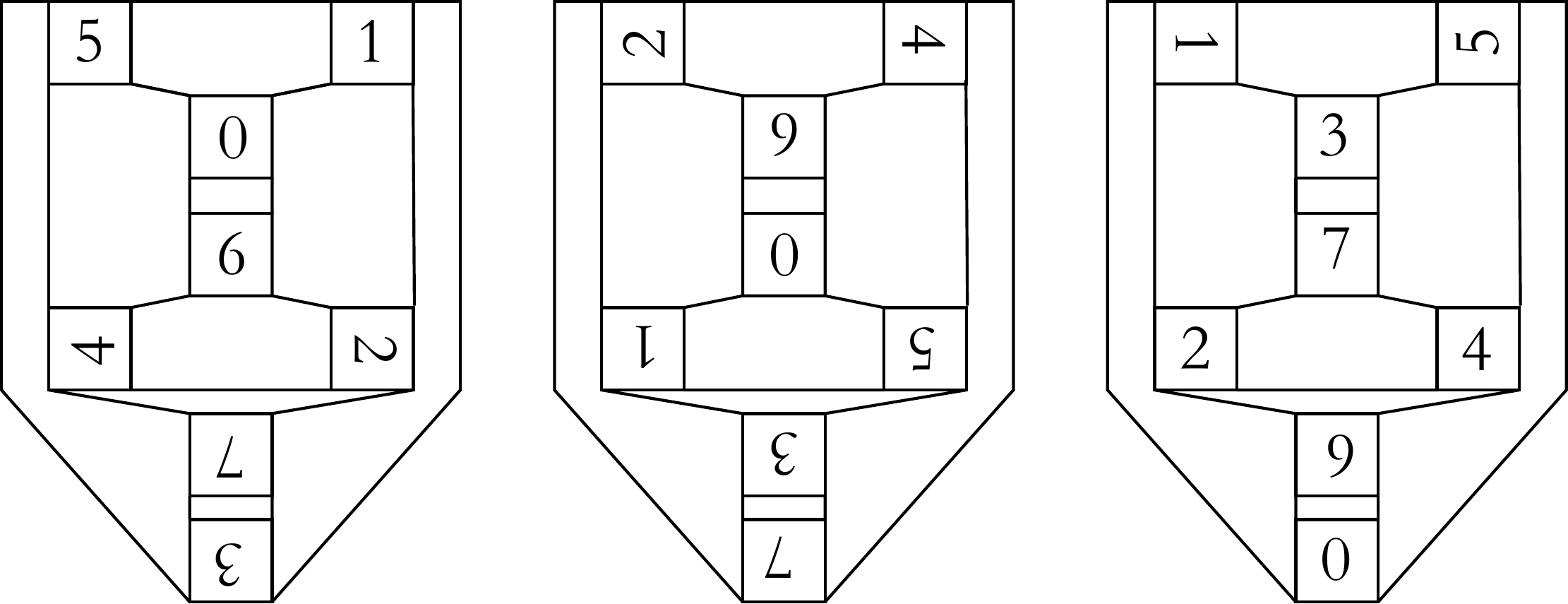}
\caption{$G_8^4$ (left) and the images under the action of its symmetry (middle) and (right).}\label{fig.g84}
\end{figure} 
\begin{figure}[htb]
\includegraphics[bb = 0 0 314 338 , scale = 0.3]{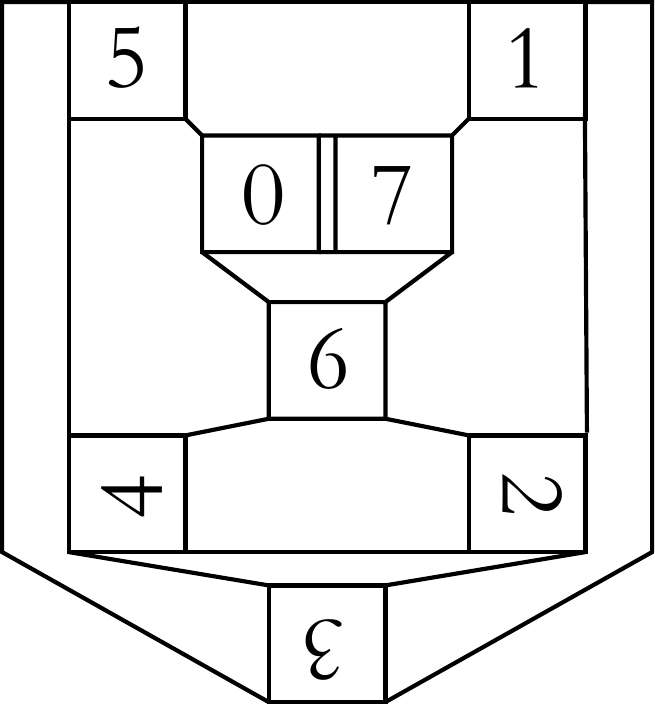}
\caption{$G_8^5$.}\label{fig.g85}
\end{figure}
\begin{figure}[htb]
\includegraphics[bb = 0 0 682 410 , scale = 0.3]{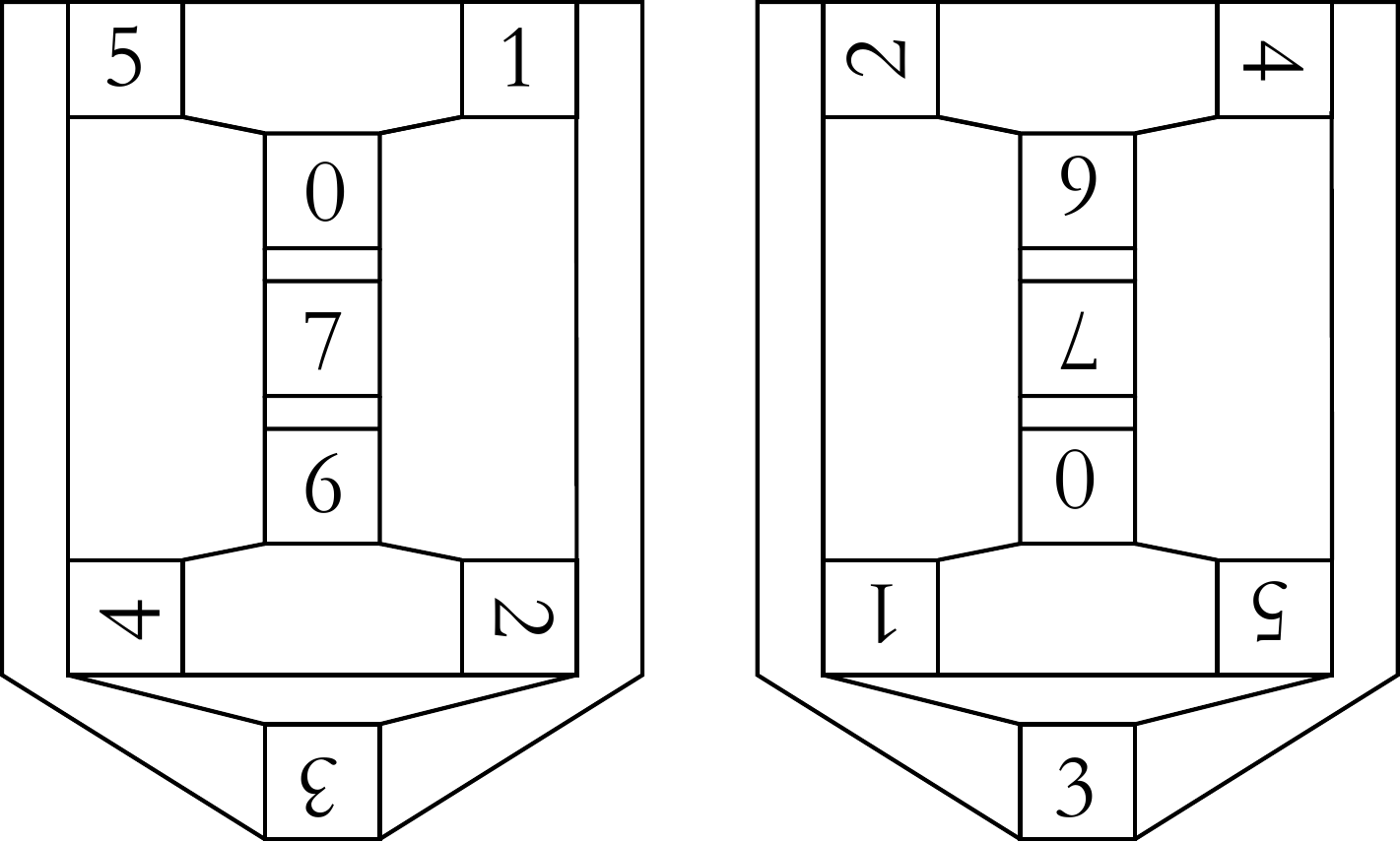}
\caption{$G_8^6$ (left) and the image under the action of its symmetry (right)}\label{fig.g86}
\end{figure}

We now summarize the symmetry which is depicted in the figures in terms of sequences.
Note that $G_8^3$, $G_8^4$, $G_8^5$, and $G_8^6$ have bilateral symmetries and
the symmetries following the mark ``*" are those corresponding to bilateral symmetries.
As we did for 7 twists case, the addition here is modulo~4.

\begin{enumerate}
\item $G_8^s$
\begin{itemize}
\item $a_0a_1a_2a_3a_4a_5a_6a_7 \mapsto a_1a_2a_3a_4a_5a_6a_7a_0$,
\item $a_0a_1a_2a_3a_4a_5a_6a_7  \mapsto a_7a_6a_5a_4a_3a_2a_1a_0$.
\item * $a_0a_1a_2a_3a_4a_5a_6a_7  \mapsto a_6a_5a_4a_3a_2a_1a_0a_7$.
\item * $a_0a_1a_2a_3a_4a_5a_6a_7  \mapsto a_0a_7a_6a_5a_4a_3a_2a_1$.
\end{itemize}
\item $G_8^1$
\begin{itemize}
\item No symmetry.
\end{itemize}
\item $G_8^2$
\begin{itemize}
\item $a_0a_1a_2a_3a_4a_5a_6a_7  \mapsto a_7a_6a_5a_4a_3a_2a_1a_0$.
\end{itemize}
\item $G_8^3$
\begin{itemize}
\item $a_0a_1a_2a_3a_4a_5a_6a_7 \mapsto a_6(a_4+2)(a_5+2)a_7(a_1+2)(a_2+2)a_0a_3$, and
\item * $a_0a_1a_2a_3a_4a_5a_6a_7  \mapsto a_0a_5a_4a_7a_2a_1a_6a_3$.    
\end{itemize}
\item $G_8^4$
\begin{itemize}
\item $a_0a_1a_2a_3a_4a_5a_6a_7 \mapsto a_6(a_4+2)(a_5+2)a_7(a_1+2)(a_2+2)a_0a_3$, 
\item $a_0a_1a_2a_3a_4a_5a_6a_7 \mapsto a_3(a_5+2)(a_4+2)a_0(a_2+2)(a_1+2)a_7a_6$, and
\item * $a_0a_1a_2a_3a_4a_5a_6a_7 \mapsto a_6(a_2+2)(a_1+2)a_7(a_5+2)(a_4+2)a_0a_3$.
\end{itemize}
\item $G_8^5$
\begin{itemize}
\item * $a_0a_1a_2a_3a_4a_5a_6a_7  \mapsto a_7a_5a_4a_3a_2a_1a_6a_0$.
\end{itemize}
\item $G_8^6$
\begin{itemize}
\item $a_0a_1a_2a_3a_4a_5a_6a_7 \mapsto a_6(a_4+2)(a_5+2)a_3(a_1+2)(a_2+2)a_0a_7$.
\item * $a_0a_1a_2a_3a_4a_5a_6a_7  \mapsto a_0a_5a_4a_3a_2a_1a_6a_7$.
\end{itemize}
\end{enumerate}

\subsection{Persistent genuine lamination}

Using Conditions \ref{cond.knot} and \ref{cond.paraaug} together with symmetries of the graph discussed above, 
we reduce the number of sequences, equivalently, the number of links we have to consider. 
However, in the proof of Lemma \ref{lem44}, the number of components of the links is crucial on computational time. 
The following lemma enables us to reduce the number of components for most of the links. 
This is an application of the result obtained by Wu in \cite{Wu2012}, which is of interest in its own right.

\begin{lemma}\label{cor.Wu}
Let $L_0$ be a link corresponding to a sequence satisfying
Conditions \ref{cond.knot} and \ref{cond.paraaug} for one of the 9 graphs in Figure \ref{fig.graphs}
as explained in \S \ref{subsec.settings}.
After assigning an orientation to the knot component of $L_0$,
suppose that the pair of segments on the knot component
passing through an augmented circle $A$ of $L_0$ are anti-parallel (see Figure \ref{fig.antipara}).
If we perform Dehn surgeries on $A$ satisfying Condition \ref{cond.alt} other than the ones corresponding to single full-twists, 
then any alternating knot obtained by twisting along the other augmented circles 
admits no exceptional surgeries, 
or it has a reduced alternating diagram 
with twist number less than the number of vertices of the plane graph 
used to construct $L_0$. 
\end{lemma}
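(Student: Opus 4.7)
The plan is to apply Wu's persistent essential lamination construction from \cite{Wu2012}. The anti-parallel condition on the pair of knot strands through $A$ implies that the linking number of $A$ with the relevant strands is zero, which is the standard setting for Wu's construction. Performing $\pm 1/p$ surgery on $A$ with $|p|\ge 2$ (that is, twisting other than by a single full-twist) then yields a link $L_1$, obtained from $L_0$ by filling in $A$, whose exterior carries a persistent genuine essential lamination $\lambda$. The requirement $|p|\ge 2$ is what ensures that $\lambda$ is non-degenerate; at $|p|=1$ the local model collapses and the lamination construction can fail.

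Next, I would push $\lambda$ through the twisting operations on the remaining augmented circles of $L_1$. Each such twist is a non-trivial $\pm 1/q$ Dehn surgery on an augmented component, so persistence guarantees that $\lambda$ survives as an essential genuine lamination in the exterior of the resulting alternating knot $K$. Moreover, any non-trivial Dehn surgery on $K$ is itself a non-trivial Dehn filling of the remaining boundary component of $E(L_1)$ after all the augmented fillings have been performed, so persistence again keeps the lamination essential in the surgered closed manifold. The usual consequences of carrying a genuine essential lamination (via Gabai--Oertel and the subsequent laminar theory) then rule out the surgered manifold being reducible, toroidal, or Seifert fibered, and hence $K$ admits no non-trivial exceptional surgeries.

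The alternative conclusion, namely that $K$ has a reduced alternating diagram with twist number strictly less than the vertex count of the plane graph defining $L_0$, is intended to cover the degenerate cases in which the twisting procedure collapses or coalesces the diagram (for example, when consecutive tangles merge into a simpler single twist region or when a crossing becomes nugatory after the twists). Such knots are already covered by the earlier enumeration at a lower twist number, so they can be excluded from the current case. The main obstacle will be confirming that our graph-based construction falls exactly within the scope of Wu's hypotheses: we must verify that our anti-parallel condition matches Wu's zero-linking-number requirement on the augmented circle, that the persistence property genuinely extends across Dehn surgeries on all the other augmented circles in our family of links, and that the resulting lamination is robust enough to forbid toroidal fillings in addition to reducible and Seifert fibered ones.
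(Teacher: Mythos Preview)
Your proposal has two genuine gaps.

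First, the order in which you build the lamination is inverted, and this matters. You propose to fill $A$ first and then ``push $\lambda$ through'' the $1/q$-surgeries on the remaining augmented circles. But Wu's construction in \cite{Wu2012} needs a $\pi_1$-injective spanning surface as input, and there is no obvious such surface for your intermediate link $L_1$ (which still carries all the other augmented circles and is not an alternating diagram in any useful sense). The paper proceeds the other way around: it first performs the twists on all the \emph{other} augmented circles, obtaining a two-component link $k_0\cup A$; then it replaces the tangle at $A$ by a $0$-tangle to produce an honest alternating link $L$; and only then does it invoke the checkerboard surface $F$ of $L$, which is $\pi_1$-injective by \cite{DelmanRoberts1999} precisely because the diagram of $L$ is reduced alternating. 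Wu's \cite[Corollary 6.9]{Wu2012} is then applied directly to this $L$ and $F$, and the anti-parallel hypothesis enters as the parity condition $n$ even, $|n|>2$ in that corollary (equivalently $m\ge 2$ for the surgery slope $-1/m$ on $A$). No ``persistence through further fillings'' is needed, because the lamination is built after all other fillings are already done. The alternative in the lemma (twist number drops) arises exactly when the diagram of $L$ fails to be reduced, which the paper pins down to the bigon vertices of $G_7$ and the $G_8^i$.

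Second, you overclaim what a genuine essential lamination buys. Such a lamination does \emph{not} exclude toroidal manifolds; Brittenham's theorem \cite{Brittenham1993} only rules out small Seifert fibered spaces. The paper handles the reducible and toroidal cases separately, by appealing to the existing classifications for alternating knots (Menasco--Thistlethwaite for reducibility, and the results cited in the proof of Corollary~\ref{Cor} for toroidal surgeries). So the lamination argument is used only to eliminate the small Seifert case, and the remaining exceptional types are dispatched by other means. Your sketch would need to incorporate that division of labor rather than asserting that the lamination alone finishes the job.
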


\begin{figure}[htb]
\includegraphics[bb = 0 0 450 72 , scale = 0.75]{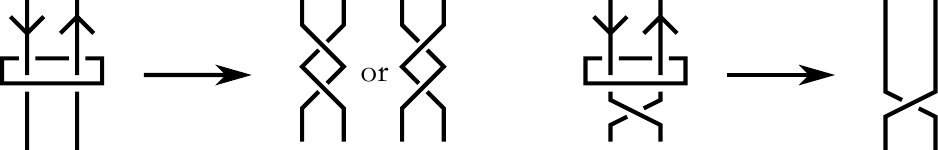}
\caption{Allowable twisting for anti parallel edges.}\label{fig.antipara}
\end{figure}

\begin{remark}
In Figure \ref{fig.antipara}, only one of the full-twists is shown in the case $A$ has a  crossing, namely the one with one crossing remaining, 
because we just perform Dehn surgeries on $A$ satisfying Condition \ref{cond.alt}. 
Please see Remark \ref{rmk.sing_crossing}. 
\end{remark}

\begin{proof}[Proof of Lemma \ref{cor.Wu}]
First we show that any Dehn surgeries on the alternating knots which we consider in the lemma 
yield manifolds containing essential laminations 
by using the following result obtained by Wu \cite[Corollary~6.9]{Wu2012}. 
We here omit the definition and properties of essential laminations. See \cite{Wu2012} for details. 

Let $L$ be a non-split oriented link, and $F$ a $\pi_1$-injective spanning surface of $L$. 
Take an arc $\alpha$ on $F$, and take 
a regular neighborhood $D$ of $\alpha$ embedded in $F$. 
Set up a coordinate on the boundary of $B = N ( \alpha )$ in $S^3$ 
so that $L \cap \partial D = a_1 \cup a_2$ gives a 0-tangle and $F \cap \partial B$ is isotopic to a $1/0$-tangle. 
Consider the knot $K$ obtained from $L$ 
by replacing $a_1 \cup a_2$ with a $1/n$-tangle. 
Suppose that $| n | > 2$ is odd if $\alpha$ connects parallel arcs, and even otherwise. 
Here $\alpha$ is said to \textit{connect parallel arcs} (resp. connect antiparallel arcs) 
if the orientations of $a_1$, $a_2$ points to the same direction (resp. the different direction). 
Then, for any non-meridional slope $r$, 
the surgered manifold $K(r)$ contains an essential lamination. 

We need to check whether it is applicable to our setting. 
Let $k_0$ denote the knot component of $L_0$.
From $L_0$, a 2-component link $L_1= k_0 \cup A$ is obtained 
by twisting along augmented circles other than $A$.
Here we further require the twisting above satisfies Condition \ref{cond.alt}.
For simplicity, we only consider the case where the surgery slopes for $A$ are $-1/m$ for some $m>0$.

Let $L$ be the link obtained from $L_1$ by 
replacing the tangle corresponding to $A$ with the 0-tangle (without augmented circles). 
Here we regard the tangle corresponding to $A$ as the $1/0$- or $-1/1$-tangle by ignoring $A$. 
See Figure \ref{fig.withalpha}. 

\begin{figure}[htb]
\includegraphics[bb = 0 0 461 72 , scale = 0.75]{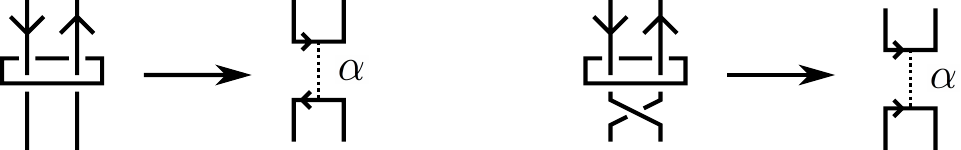}
\caption{Replacement to 0-tangles.}\label{fig.withalpha}
\end{figure}

Note that the diagram $D$ of $L$ so obtained must be alternating, and has an orientation induced from that of $k_0$. 
Since any the 9 graphs in Figure \ref{fig.graphs} has no cut vertex, 
$D$ has to be non-split, and so, $L$ is a non-split oriented link by \cite{Menasco1984}. 
Let $\alpha$ be the arc connecting the 2 strands of the 0-tangle replaced from the tangle corresponding to $A$. 
See Figure \ref{fig.withalpha}. 
Consider the checker-board surface $F$ for the diagram $D$ containing $\alpha$. 
Then, by \cite[Lemma 2.1]{DelmanRoberts1999}, $F$ is a $\pi_1$-injective spanning surface for $L$ 
if $D$ is a reduced alternating diagram of~$L$. 

Assume for the sake of a contradiction that $D$ is not reduced. 
Then the vertex substituted by the tangle corresponding to $A$ 
and the vertex corresponding to the reducible crossing in the graph gives a pair of cut vertices. That is, removing the pair of vertices, the graph must become disconnected. 
Among the 9 graphs in Figure \ref{fig.graphs}, 
only for the graphs $G_7$ and $G_8^1, \dots, G_8^6$ might this arise, and
then only if the pair of vertices is the pair adjacent to a bigon. 
However, such a reducible vertex can appear only if 
the knot obtained by performing twistings on the augmented circles from $L_0$ 
has a reduced alternating diagram 
with twist number less than the number of vertices of the plane graph 
used to construct $L_0$. 

Thus, otherwise, it follows that $D$ is a reduced alternating diagram of $L$, and 
$F$ is a $\pi_1$-injective spanning surface for $L$. 

Now we note that performing Dehn surgery on $A$ along the slope $-1/m$ 
is equivalent to performing replacement $a_1 \cup a_2$ 
with $ 1/(2m - 1)$-tangle (resp. $ 1 / (2m)$-tangle) 
if $\alpha$ connects parallel arcs (resp. antiparallel arcs). 
Thus if an alternating knot $K$ is obtained from $L_1$ 
by Dehn surgeries on $A$ along the slope $- 1/m$, 
then $K$ is obtained from $L_1$ by replacing $a_1 \cup a_2$ with the $1/n$-tangle 
with $n=2m-1$ (resp. $n = 2m$) 
if $\alpha$ connects parallel arcs (resp. antiparallel arcs). 

Consequently, we can apply \cite[Corollary 6.9]{Wu2012} to the setting above to obtain that, 
under the assumption of the lemma, 
for an alternating knot $K$ obtained from $L_1$ 
by Dehn surgeries on $A$ along the the slope $-1/m$, 
the surgered manifold $K(r)$ contains an essential lamination for any non-trivial slope $r$. 

Next we show that the laminations in the surgered manifold $K(r)$ so obtained are all \textit{genuine}, 
i.e., it is carried by an essential branched surface 
with at least one complementary component which is not an $I$-bundle. 
To see this, as claimed in the proof of \cite[Corollary 6.9]{Wu2012}, 
we note that one of the complementary component of the essential branched surface in $K(r)$ 
is the same as the exterior of $L$ cut along $F$. 
Then, by a work of Adams \cite[Theorem 1.9]{Adams2007}, 
or its generalization \cite[Theorem 1.6]{FKP}, 
we see that $F$ is not a fiber surface, in particular, 
the exterior of $L$ cut along $F$ is not an $I$-bundle. 
Thus the laminations in the surgered manifold $K(r)$ so obtained are all genuine. 

This implies that the surgered manifold $K(r)$ is not a small Seifert fibered space 
due to the result by Brittenham \cite{Brittenham1993}. 
Suppose that some Dehn surgery on the alternating knot which we consider in the lemma yields a non-hyperbolic manifold.
Then, as explained in the proof of Corollary \ref{Cor}, it is already known that the manifold must be irreducible, since any hyperbolic alternating knot has no reducible surgeries.
If the manifold is toroidal, then, also as explained in the proof of Corollary \ref{Cor}, it is known that the knot must admit a reduced alternating diagram with twist number at most three, less than the number of vertices of the plane graph used to construct $L_0$.
Thus it suffice to consider the case that the surgered manifold is small Seifert fibered.
Thus the proof of the lemma is completed. 
\end{proof}

By Lemma \ref{cor.Wu}, we can perform twisting along some augmented circles beforehand with slope $1/1$ or $-1/1$.
This reduces the number of components of the links. 

Furthermore, by this twisting, the twist number may decrease, see Figure \ref{fig.dtwist}.
In this case, we do not need to investigate the link. 
Thus we can reduce the number of links to investigate as well. Thus it gives an additional condition. 

\begin{figure}[htb]
\includegraphics[bb =  0 0 495 119 , scale = 0.65]{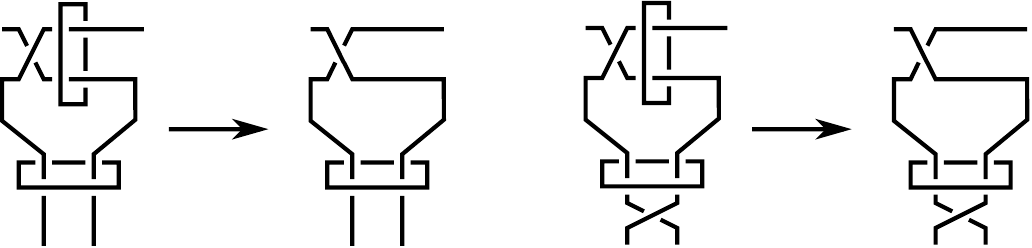}
\caption{The number of twists will decrease.}
\label{fig.dtwist}
\end{figure}

\subsection{Code}\label{subsec.codes}

The whole procedure, namely generating links and applying symmetry and Lemma \ref{cor.Wu}, is implemented as a set of files \texttt{twistLink.py}'s. 

Here, in the case of 7 twists, we summarize the code \texttt{twistLink7.py} in Algorithm \ref{alg.plink}.
We can check if a given sequence satisfies Condition \ref{cond.knot} by carefully tracing the knot components and checking if it is connected.
The code for 6 or 8 twists case works similarly.
We note that for each sequence, 
we first make a .lnk file which is a SnapPy's format for links drawn on plink and then, read that file on SnapPy to get a
triangulation of the link complement.
We perform Dehn surgery if we can apply Lemma \ref{cor.Wu} and as a result we save a .tri file that is a SnapPea's format
for a triangulation.
For .lnk and .tri files, see the documentation of SnapPy \cite{CDW}.
\begin{algorithm}
\caption{Enumerating augmented links}
\label{alg.plink}
\begin{algorithmic}
\REQUIRE Graph $G_7$.
\ENSURE Triangulation files of 1271 link complements.
	\STATE Prepare a list, named checked.
	\FOR{a sequence $\{a_i\}_{i=0}^6$ of $\{0,1,2,3\}$ of length 7}
		\STATE Check if $\{a_i\}_{i=0}^6$ is in is\_checked file. If it is, we go to the next sequence.
		\IF{$\{a_i\}_{i=0}^6$ satisfies Conditions \ref{cond.knot} and \ref{cond.paraaug}}
			\STATE Add symmetric sequences 
			\begin{itemize}
				\item	$a_6(a_2+2)(a_1+2)a_3(a_5+2)(a_4+2)a_0$,
				\item $a_6(a_4 + 2)(a_5 + 2)a_3(a_1 + 2)(a_2 + 2)a_0$.
			\end{itemize}
			 in checked list.
			\STATE Draw edges of $G_6$ depicted in Figure \ref{fig.6-twist}.
			\STATE Determine the sign of the crossings in the rectangles if any and 
				the number $n$ of slopes that will be surgered with 
				negative slopes. (see explanation in \S \ref{subsec.conditions}.)
			\STATE Fill rectangles according to $\{a_i\}_{i=0}^6$ (Figure \ref{fig.fill_tangle}).
			\COMMENT{So far, we are dealing with .lnk file.}
			\FOR{$0\leq j \leq 6$}
				\IF{Two edges passing through the augmented circle in $j$-th rectangle are anti-parallel 
				(Figure \ref{fig.antipara})}
					\STATE Perform a surgery on the augmented circle in $j$-th rectangle along the slope $1/1$ or $-1/1$.
					\COMMENT{Applying Lemma \ref{cor.Wu}. Here we use .tri file. We choose the sign so that Condition \ref{cond.alt} is satisfied.}
					\IF{After the surgery, the number of twists decrease (Figure \ref{fig.dtwist})}
						\STATE Skip this sequence and go to next sequence.
					\ENDIF 
				\ENDIF
			\ENDFOR
		\STATE Save the resulting triangulation file.
		\ENDIF
	\ENDFOR
\end{algorithmic}
\end{algorithm}

In Table \ref{Table.NumberofLinks}, we summarized the number of links that we obtained by running the code.

\begin{table}[htb]
  \begin{center}
    \caption{Number of links}
    \label{Table.NumberofLinks}
    \begin{tabular}{|l|c|c||r|} \hline
     Graph &  number of links \\ \hline \hline
      $G_6$ &   185\\\hline	
      $G_7$&   1271\\\hline	
      $G_8^s$&   2171\\ \hline	
      $G_8^1$&   11299\\\hline	
      $G_8^2$&   5503\\\hline	
      $G_8^3$&   2645\\\hline	
      $G_8^4$&   1651\\\hline	
      $G_8^5$&   3004\\\hline	
      $G_8^6$&   2675\\\hline	
      Total & 30404\\\hline	
    \end{tabular}
  \end{center}
\end{table}

Consequently we have proved Lemma \ref{lem43}.


\section{Procedures for Computer-aided calculations}\label{sec.codes}

In this section, we give detailed explanations on our code \texttt{fef.py}, 
and give a proof of Lemma \ref{lem44} established by computer-aided calculations using the code.

\subsection{A discussion of the code}
We here explain some features of our code \texttt{fef.py}. 

This is based on the code developed in \cite{MartelliPetronioRoukema}. 
In \cite{MartelliPetronioRoukema}, using the code, 
the authors gave a complete classification of 
exceptional fillings of the minimally twisted five-chain link complement, 
a well-known hyperbolic link of 5 components in $S^3$. 
However we had to modify the code, since 
the code in \cite{MartelliPetronioRoukema} essentially depends upon 
the code given in \cite{HMoser}. 
To obtain mathematically rigorous computations that cooperate well with the program
we improved their code using verified numerical analysis based on interval arithmetic. 
Some fundamentals about such method will be given in Appendix \ref{ape.A}. 
Note that the key step to show the links have no exceptional surgeries heavily depends upon the techniques developed in \cite{HIKMOT} by the team containing the authors of this paper.

The main algorithm of \cite{HIKMOT} returns not only a certificate of hyperbolicity for a given
 triangulated manifold $M$, but also closed sets in $\mathbb{C}$ that contain the exact tetrahedral 
 shapes. The methods of that paper use interval arithmetic to establish this claim. While technically 
 the shapes are in a (real valued) interval cross (real valued) interval, i.e. a rectangular box, we will 
 slightly abuse notation and say that each tetrahedral parameter is determined up to an interval. As 
 noted in \cite{HIKMOT}, one of the advantages of interval arithmetic is that it naturally 
 extends to the computations of other invariants and geometric data of the manifold $M$. 
 
One such 
 piece of geometric data is \emph{parabolic length},  i.e. given a horoball packing of $M$ that is 
 maximal in the sense that each horoball in the packing is tangent to at least one other horoball, we 
 can define the length of a parabolic element $p$ fixing a horosphere $S$ setwise as the translation displacement 
 of $p$ in $S$. For a one cusped manifold, this length is canonically defined, however if $M$ has
  more than one cusp, this quantity will depend on our choice of horoball packing. The 6-Theorem, proved
   independently by Agol \cite{agol_dehn} and Lackenby \cite{lack},  provides a key application of 
   parabolic length to our problem namely if $M$ is filled along a sufficiently long slope $r$, then the filled manifold $M(r)$ is hyperbolic.
   If $M$ is filled along multiple cusps by slopes $s_1,...,s_n$, we pay specific attention to 
   \cite[Theorem 3.1]{lack}, which states that
   filling is hyperbolic provided in some horoball packing each slope, $s_i$, we fill along has length strictly bigger than $6$. 
   In the actual computations, we actually enumerate slopes of length less than $6.0001$.
Note that even if we use interval arithmetic, we need to compare floating point numbers, which are the ends of intervals, to prove inequality.
Here we use 6.0001 instead of  6 because floating point numbers are not designed to be used for equality.
  
The code \texttt{fef.py} enumerates all sets of such slopes for a manifold $M$. 
As mentioned above, parts of our code and \texttt{fef.py} in particular are very directly adapted from the code explained \cite[$\S$2.1]{MartelliPetronioRoukema} (compare to their find\_exceptional\_fillings.py). 
Pseudo-code for \texttt{fef.py} is provided as Algorithm \ref{fef.py}.

\begin{algorithm}
\caption{The algorithm for fef.py}
\label{fef.py}
\begin{algorithmic}
\REQUIRE A triangulation $T$ of a manifold $N$. 
\ENSURE A verification that all non-trivial Dehn surgeries of a manifold fitting the conditions of \S\ref{subsec.conditions}
 are hyperbolic.   
	\STATE Try to canonize $T$. 
	\IF {T can be canonized and hikmot verified the hyperbolicity of canonized triangulation.}
		\STATE Use the canonized triangulation.
	\ELSIF {Find a triangulation whose hyperbolicity is checked by hikmot.}
		\STATE Use the found triangulation.
	\ELSE
		\STATE If we cannot find any triangulation that hikmot verifies hyperbolicity, we give up.
		(This didn't happen in our computation for alternating knots)
	\ENDIF
		\STATE Compute lower bounds for the cusped areas of $N$ using the (already) verified tetrahedral shapes for $T$. For each cusp,
		also compute the cusp shape as an parallelogram determined by a quotient of the complex plane by $1$ and $x+yi$. Finally, compute a lower bound for the diameter of the horoball for that cusp and enforce with this bound that the intersection of the boundary of a horoball (not centered at $\infty$) and an ideal tetrahedron having a vertex at $\infty$ intersect in a triangle.
	\IF{Failed on some procedure above.}
		\STATE Use $\frac{3\sqrt{3}}{8}$ as a lower bound for cusp area 
		(horoball of this size always exists by a standard fact of hyperbolic 3-manifolds, see e.g. Proposition 2 in cusp\_neighborhoods.cc of Weeks' SnapPea kernel code, available at the webpage of SnapPy \cite{CDW}). 
		For these cusps, the cusp shape is determined by $1$ and
		$x+yi$ with $x=0$ and $y=\frac{3\sqrt{3}}{8}$.
	\ENDIF
	\STATE 
	The length of a slope $\frac{p}{q}$ is $\sqrt{\frac{A}{y}((p+xq)^2+(yq)^2}$, where $A$ is the area of corresponding horosphere.
	List all slopes of length less than 6.0001 in these cusps. For slopes on each cusp less than length 6.0001,
	perform surgery with that slope if it meets Condition \ref{cond.alt} of \S \ref{sect:conditions}.
	\IF{All cusps have been surgered along.}
		\STATE Verify that the surgered manifold is hyperbolic. 
	\ELSE
		\STATE Verify this intermediately surgered manifold is hyperbolic and repeat the procedure above to find all slopes of length less than 6.0001 in the cusps of this partially surgered manifold and (recursively) verify the hyperbolicity of these surgeries. 
	\ENDIF
\end{algorithmic}
\end{algorithm}

As noted above, \texttt{fef.py} is based upon the file \emph{find\_exceptional\_fillings.py} 
used in the first version of \cite{MartelliPetronioRoukema}.
In the latest version of \emph{find\_exceptional\_\linebreak fillings.py}, they integrated our code.
Although the old version of \emph{find\_\linebreak exceptional\_fillings.py}  is not available anymore,
we give some explanation to clarify our contribution.
For the purposes of the following discussion \texttt{fef.py} will be used to denote our file 
and \emph{find\_exceptional\_fillings.py} will be used to denote the code used for the first version of \cite{MartelliPetronioRoukema}.
One of the key differences between the two files is that \texttt{fef.py} is written to employ interval arithmetic.
However, while the significance of this change is seemingly only visible in a few places such as the declarations of variables, 
it underpins the error control we employ to make the computation rigorous.
The two methods also differ in selecting a horoball packing to compute parabolic length. 
The manifolds we are interested in have a distinguished cusp, namely that corresponding to the knot component, 
whereas the manifolds in \cite{MartelliPetronioRoukema} do not. 
Furthermore, \emph{find\_exceptional\_fillings.py} and the arguments surrounding its implementation cut down 
the number of cases by using the symmetries of the minimally twisted five chain link, 
and so there is a preference toward keeping the horoball packing as symmetric as possible.

To better reduce the number of cases we must consider, we have found it (experimentally) advantageous to
choose a horoball packing where the equivalence class of horoballs corresponding to the cusp of the knot 
component has as much volume as possible so that the slopes in that cusp are as long as possible.
 Consequently, \texttt{fef.py} inflates this horoball past the point of all horoballs being equal volume and 
 reduces the volumes of the other horoballs accordingly. This also produces a horoball packing that is invariant
 under the symmetries described in \S \ref{subsect:symmetry}, and so is very must in the same spirit as
 symmetry reductions employed in \cite{MartelliPetronioRoukema}.
Furthermore, this optimization appears to be crucial since we are dealing with 30404 links and  
we want to reduce the number of slopes to compute as far as possible.

\subsection{Computation environments and verifying computations}

As we have seen in the previous subsection, the code \texttt{fef.py} computes recursively with respect to the number of cusps.
Usually, for each augmented circle of our links generated as explained in \S \ref{sec.symmetry}, 
there are 2 or 3 surgery slopes of parabolic length less than 6.
In the worst case, there are about $18,000$ manifolds to investigate for a single link.
In this case, it takes about 51 hours on 
a single CPU of TSUBAME (The computational ability of a single CPU of TSUBAME is 
comparable to that of a standard personal computer).
Since there are 30404 links, we need high-spec machine.
The second author was able to access "TSUBAME", the super-computer of Tokyo Institute of Technology.
See the website \cite{TSUBAME.web} of TSUBAME for a basic information, 
\cite{TSUBAME1} for a brief survey, and \cite{TSUBAME2} for a detailed exposition.
Roughly speaking, on TSUBAME, one can use many machines at the same time.
Although generally, to use parallel computation effectively we need some work, in our case, the situation
itself is totally parallel, that is, we need to investigate each link independently.
Thus we can use TSUBAME effectively.
In practice, we ``rented" 64 machines from TSUBAME, and then it took a week to prove Theorem \ref{MainThm}.

\begin{proof}[Proof of Lemma \ref{lem44}]
Let $L$ be one of the 30404 augmented links in $S^3$ obtained in Lemma \ref{lem43}, and 
$K$ an alternating knot obtained by a Dehn surgery on $L$ 
such that the surgery corresponds to twisting along the unknotted components of $L$. 
By construction, $K$ has a reduced alternating diagram with twist number at least $6$, and so it is not a torus knot of type $(2,p)$. 
This implies that $K$ is hyperbolic by \cite{Menasco1984}. 

We show that $K$ admits no non-trivial exceptional surgeries by running our code \texttt{fef.py} on TSUBAME 
for the triangulation files of the 30404 augmented links obtained by the files \texttt{twistLink.py}'s. 

On TSUBAME, we need to know a specified command to run it. The command we used is

\medskip
\noindent \texttt{t2sub -q V -J 0-11299 -l walltime=5:00:00  -W group\_list=t2gxxx\\ -l  select=1 ./8twist1.sh.}

\medskip
\noindent Note that the command should be in one line.
We here explain this command.
First, ``t2sub" is the basic command to run TSUBAME and we used several options;
\begin{itemize}
\item ``-q V" specifying a queue name to submit a job (always necessary).
\item `` -I walltime = 5:00:00" meaning that if the computation time exceeded 5 hours, then we quit the computation.
\item ``-W group\_list=t2gxxxxxxx" specifying the name of the user.
\item ``-I select=$n$" meaning that for a single computation (i.e. a single link in our case), we use ``$n$ machines", and
\item ./8twist1.sh is the execution file. The contents are as follows;
	\begin{itemize}
		\item[] \#!/bin/sh
		\item[] cd \$\{PBS\_O\_WORKDIR\}
		\item[] python fef.py 8twist1tri/dataname\$PBS\_ARRAY\_INDEX.tri
	\end{itemize}
	Here \${PBS\_O\_WORKDIR} is the current directly and,
\item ``-J 0-11299" means that \$PBS\_ARRAY\_INDEX ranges from 0 to 11299.
\end{itemize}
We remark here although \texttt{twistLink.py}'s generate the triangulation files named like 0\_021213.tri, 
(here the first 0 is the number of augmented circles that will be filled with $1/p$ with negative $p$)
we renamed all the triangulation files to ``dataname$n$.tri" so that it will be suitable for ``-J" option above, called array job.
The information about the number of augmented circles 
that will be filled with negative slopes are stored in the contents of the files.

TSUBAME returns an outputs file and an error file for each triangulation file.
We here include examples,
\begin{itemize}
\item output file\\
8twist1tri/dataname1015.tri\\
./4\_20320113.lnk\_filled(0,0)(0,0)(0,0)(0,0)\\
Manifold volume:\\
31.5182982095\\
Cusp shapes:\\
 Cusp 0 : complete torus cusp of shape (5.02637698521+9.96657568374j),\\
 Cusp 1 : complete torus cusp of shape (0.594148461042+0.999088628226j),\\
 Cusp 2 : complete torus cusp of shape (0.333034596342+1.15348583079j),\\
 Cusp 3 : complete torus cusp of shape (0.333034596342+1.15348583079j)\\
With 4 fillings:\\
Total: 0\\
Candidate hyperbolic fillings:\\
With 4 fillings:\\
$[]$\\
Total: 0\\
8twist1tri/dataname1015.tri done\\
Computer time needed: 0:00:16.603389\\
Number of manifolds hikmot ensured the hyperbolicity 19\\
\item error file\\
======================================\\
Your accounting ID\\
 group id            : t2xxxxxxx\\
------------------------------------------------\\
 Job informations\\
 job id              : 60373[1015].t2zpbs-vm1\\
 queue               : V\\
 num of used node(s) : 1\\
 used node(s) list   :\\
  t2a001137-vm1\\
 used cpu(s)         : 1\\
 walltime            : 00:00:16 (16 sec)\\
 used memory         : 26080kb\\
 job exit status     : 0\\
------------------------------------------------\\
Accounting factors\\
 x 1.0 by queue\\
 x 1.0 by job priority\\
 x 1.0 by job walltime extension\\
 = 1.0 is the total accounting factor\\
------------------------------------------------\\
Expense informations\\
 maximum CPU units with factors : 64\\
 used CPU units : 1\\
======================================\\
\end{itemize}
This proves the link obtained from $G^1_8$ and the sequence $20320113$ does not have any exceptional surgeries satisfying Condition \ref{cond.alt}.
By running \texttt{fef.py} on TSUBAME, we have 30404 output files and error files.
\footnote{Unfortunately the version 1.1 of \texttt{fef.py} contained a small bug, which could be immediately fixed in the update version (fef\_ver1.4).
It was not relevant to the main procedure, but for rigorousness, we have performed re-computation, and obtained the same outcomes (in February, 2016).
Please see the readme file of \texttt{fef.py} available on our web site \cite{Webpage}.}
These data are available at \cite{Webpage}. 
The versions of gcc and python, and any other relevant information on the nodes of  TSUBAME that we used are shown in Table~\ref {version_table}. 
Fortunately, for all 30404 manifolds, the outputs show that they have no 
exceptional surgeries satisfying Condition \ref{cond.alt}.
In total, i.e. the sum of the computation time of all nodes, 
computation time was approximately 512 days, and 
the number of manifolds we applied hikmot is 5646646.
Consequently we have completed our proof of the Lemma \ref{lem44}. 
\end{proof}

\begin{table}[htb]
    \begin{center}
    \caption{A description of the computer system used for this computation}
    \label{version_table}
    \resizebox{\textwidth}{!}{
    \begin{tabular}{|l|l|}\hline
SnapPy & snappy-1.3.12-py2.6-linux-x86\_64.egg\\
HIKMOT & HIKMOT\_ver0.1.0\\
FEF & fef\_ver1.1\\
PYTHON & python2.6\\
\hline
\multicolumn{2}{|c|}{TSUBAME(version 2) :}\\
\hline
OS & SUSE Linux Enterprise Server 11 SP3\\
Job Scheduler & PBS Professional\\
Compilers & 
Intel Compiler 2013.1.046 (default), PGI CDK 14.6, gcc 4.3.4\\
MPI & 
OpenMPI 1.6.5 (default), MVAPICH2 2.0rc-1\\
CUDA & 6.0.1\\
CUDA driver & 331.62\\
\hline
    \end{tabular}}
    \end{center}
  \end{table}

\section*{Acknowledgements}
The authors would like to thank Neil Hoffman, 
Bruno Martelli, Nathan Dunfield, and Marc Culler for helpful discussions. 
They also thank the Support team of TSUBAME for providing technical support, 
and Shin'ichi Oishi, Masahide Kashiwagi, and Akitoshi Takayasu for their help with the 
building the verification tools needed to implement this project. 
They would also like to thank Matthias Goerner for catching a bug in the earlier version of our program. 
Finally they thank the referees for their careful readings and useful comments.


\appendix

\section{Verified Computation}\label{ape.A}
In this section, we recall briefly the notion of {\em interval arithmetic}, that makes it possible to prove rigorously
inequality by using computer.
In \texttt{fef.py}, we use so-called 6-Theorem, that states that if a parabolic length of a slope is greater
than 6, then the surgery along that slope gives a hyperbolic manifold.
Hence to study exceptional surgeries, 
we only need to consider slopes of length less than 6, and here we need to prove inequality.

On usual computation, we use {\em floating point arithmetic}.
The floating point arithmetic is a very practical method to perform approximated computation.
Here we do not go into detail, instead let us note that the set $\mathbb{F}\subset\mathbb{R}$ of real numbers that can be
represented by floating point arithmetic satisfies $|\mathbb{F}|<\infty$.
There are several ways to define $r:\mathbb{R}\rightarrow\mathbb{F}$, which is called a rounding operator.
Here we would have some error that might accumulate by iterating this process.
Interval arithmetic is introduced to deal with this unpleasant error \cite{Sunaga1,Sunaga2,Moore}.
In interval arithmetic, instead of dealing with approximated values, 
we use closed intervals of type $X=[\underline{x},\overline{x}]$.
We denote the set of all intervals by $\mathbb{IR}$.
We will design our computation as follows so that each of our intervals contains the exact value.
We first recall abstract theory of interval arithmetic and later, we will explain so-called machine interval arithmetic.
First, for a given function $f:\mathbb{R}\rightarrow\mathbb{R}$, a function $F:\mathbb{IR}\rightarrow\mathbb{IR}$ is
said to be an interval extension of $f$ if
$$F(X)\supset \{f(x)\mid x\in X\},~\forall X\in \mathbb{IR}.$$
We remark here that in \texttt{fef.py}, the only function we use is the square root, a monotone function.
Hence in practice we only need to consider endpoints of a given interval.
We further define interval extensions of four arithmetic operations as follows;
$$\begin{array}{ll}
	X+Y&=\left[\underline{x}+\underline{y},\overline{x}+\overline{y}\right],\\
	X-Y&=\left[\underline{x}-\overline{y},\overline{x}-\underline{y}\right],\\
	X\cdot Y&=\left[\min\{\underline{x}\cdot\underline{y},
	\overline{x}\cdot\overline{y},
	\underline{x}\cdot\overline{y},
	\overline{x}\cdot\underline{y}\},
	\max\{\underline{x}\cdot\underline{y},
	\overline{x}\cdot\overline{y},
	\underline{x}\cdot\overline{y},
	\overline{x}\cdot\underline{y}\}\right],\mbox{ and }\\
	X/Y&=X\cdot\left[\frac{1}{\overline{y}},\frac{1}{\underline{y}}\right],~(0\not\in Y).
\end{array}$$
To implement the interval arithmetic we need to consider $\mathbb{IF}$, 
the set of closed intervals whose endpoints are elements of $\mathbb{F}$.
We define two rounding operators 
$\lceil\cdot\rceil_{\mathbb{F}}, \lfloor\cdot\rfloor_{\mathbb{F}}:\mathbb{R}\rightarrow\mathbb{F}$ as
$$\lceil x\rceil_{\mathbb{F}} = \min\{y\in\mathbb{F}\mid x\leq y\},
~\lfloor x\rfloor_{\mathbb{F}} = \max\{y\in\mathbb{F}\mid x\geq y\}.$$
Then we can define a rounding operator $\Box:\mathbb{IR}\rightarrow\mathbb{IF}$ as
$\Box([\underline{x}, \overline{x}])\linebreak = \left[\lfloor \underline{x}\rfloor_{\mathbb{F}}, \lceil \overline{x}\rceil_{\mathbb{F}}\right]$.
Then for a given interval extension $F$ of $f$, we define the machine interval extension
$\bar{F}:\mathbb{IR}\rightarrow\mathbb{IF}$ of $F$ by $\bar{F}(X) = \Box(F(X))$.
Similarly for $\circ\in\{+,-,\times,/\}$, the machine interval extension is defined as $X\bar{\circ}Y = \Box(X\circ Y)$.
Thus we can handle round off errors.
If we use machine interval arithmetic, 
it can be readily seen that the exact value is always contained in the interval that we compute.
Therefore we can rigorously prove inequality by comparing suitable endpoints of resulting intervals.
This enables us to prove inequality by computer.

\section{A family of Montesinos knots}\label{ape.B}

We here include an application of our method 
to obtain a complete classification of exceptional surgeries on 
the Montesinos knots $M(-1/2, 2/5, 1/(2q + 1))$ with $q \ge 5$. 
Consequently the Montesinos knots are shown to have no non-trivial exceptional surgeries. 
This gives the last piece for a complete classification of exceptional surgeries 
on hyperbolic arborescent knots in $S^3$. 

Let us apply the code \texttt{fef\_mon.py} (also available at \cite{Webpage}) to the link $L_M$ depicted in Figure \ref{fig.Montesinos} (left). 
The code \texttt{fef\_mon.py} is essentially the same as \texttt{fef.py}, 
but it is suitably tuned for investigating $L_M$. 
It requires two input, namely the name of the manifold and
the number of augmented circles that will be surgered along $1/p$ with $p<0$, while 
\texttt{fef.py} automatically reads such number from the .tri files we generated by \texttt{twistLink.py}'s.

\begin{figure}[htb]
\includegraphics[bb = 0 0 399 94, width=\textwidth]{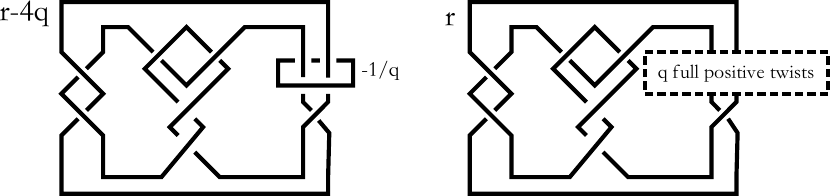}
\caption{(left)$L_M$, (right)$M(-1/2, 2/5, 1/(2q + 1))$.}
\label{fig.Montesinos}
\end{figure}

\begin{remark}
Since the linking number of the two components of $L_M$ is $2$, if we perform Dehn surgery on the augmented circle of $L_M$, 
the surgery slope will be twisted. 
For this reason, for Dehn surgery on $M(-1/2, 2/5, 1/(2q + 1))$ with slope $r$, the corresponding
slope on $L_M$ will be $r-4q$. See \cite{RolfsenBook} for details.
\end{remark}

The augmented circle will be filled by slope $-1/q$ and the other component will be filled by slope $r$.
By our code, we see that $L_M$ does not admits any exceptional surgeries with $q>5$.
The output is available in our web site \cite{Webpage}. 

Although our code can enumerate all exceptional surgeries, for the case of $L_M$, the list that our code returns 
contains many redundant, i.e. hyperbolic surgeries.
Hence we will apply our code for the case of $q=1,2,3,4,$ and $5$ separately by directly drawing diagrams.
We summarize the result in Table~\ref{Table.Mont}.
\begin{table}[htb]
  \begin{center}
    \caption{Exceptional fillings}
    \label{Table.Mont}
    \begin{tabular}{|l|c|c||r|} \hline
     $q$ &  $r$, candidate exceptional & $r$, candidate hyperbolic \\ \hline \hline
      1 &   3,4,5,6 & 7\\
      2 &   7,8,9 & 6, 10\\
      3 &   11,12 & 10,13\\ 
      4 &   15 & 14,16\\
      5 &   $\emptyset$ & 18,19,20\\
      \hline	
    \end{tabular}
  \end{center}
\end{table}

The candidate exceptional fillings in Table \ref{Table.Mont} are all known to be exceptional. 
In fact, $K(r)$ is toroidal if $(q,r) = (1,6), (2,9), (3,12), (4,15)$. 
See \cite[Theorem 1.1]{Wu2011a}. 
Otherwise $K(r)$ is small Seifert fibered. 
See \cite[Theorem~3.2]{WuPreprint}, and also see \cite{Meier}. 
(Recall that Montesinos knots have no toroidal Seifert fibered surgeries \cite{IchiharaJong2013}.)

Hence to complete the classification, it suffices to prove that all candidate hyperbolic fillings in Table \ref{Table.Mont}
actually give hyperbolic manifold.
We will use the following algorithm that we used in \cite{HIKMOT} to verify the hyperbolicity of Hodgson-Weeks Closed Census \cite{HW}.
\begin{algorithm}                      
\caption{Find positive solutions by drilling out}
\label{alg.drillout}
\begin{algorithmic}
\REQUIRE $M$ is a closed manifold with a surgery description.
\ENSURE $M$ has a good triangulation.
	\WHILE{We could find a short closed geodesic $\gamma\subset M$}
			\STATE Drill out $\gamma$ to get $M\setminus\gamma$,
			\STATE Take filled\_triangulation $N$ of $M\setminus\gamma$,
			\STATE Fill the cusp of $N$ by the slope $(1,0)$.
			\STATE (By the above procedure, we forget original surgery description and get new surgery description.)
			\IF{$N$ has positively oriented solution.}
				\IF {hikmot \cite{hikmot} verifies the hyperbolicity of $N$}
					\STATE return [True, $N$]	
				\ENDIF
			\ENDIF
	\ENDWHILE
\STATE return False.
\end{algorithmic}
\end{algorithm}
The main idea of Algorithm \ref{alg.drillout} is due to Craig Hodgson.
The code \texttt{makepositive\_drill.py} available at \cite{Webpage} 
implements the algorithm.
Then by using the code, we can verify the hyperbolicity of all resulting manifolds of candidate hyperbolic fillings.
Thus we complete the classification of exceptional surgeries along Montesinos knots.

This gives the last piece for a complete classification of exceptional surgeries 
on hyperbolic arborescent knots in $S^3$ as follows. 
Any hyperbolic arborescent knot of type III has no exceptional surgeries as shown by Wu \cite[Theorem 3.6]{Wu1996}. 
A complete classification of exceptional surgeries on hyperbolic arborescent knot of type II 
is obtained also by Wu \cite{Wu2011b}. 
There are just 3 knots among them admitting exceptional surgeries, which are all toroidal. 
Hyperbolic type I arborescent knots are two-bridge knots and Montesinos knots of length three. 
For two-bridge knots, 
a complete classification of exceptional surgeries is obtained by Brittenham and Wu \cite{BrittenhamWu2001}. 
The remaining case, for Montesinos knots of length three, other than $M(-1/2, 2/5, 1/(2q + 1))$ with $q \ge 5$, 
a complete classification of exceptional surgeries is recently established by Meier \cite{Meier}. 
Now we have shown that $M(-1/2, 2/5, 1/(2q + 1))$ with $q \ge 5$ have no exceptional surgeries. 

\bigskip

We here include a summary. 
See \cite{IchiharaJong2009}, \cite{IchiharaJong2013}, \cite{Meier}, \cite{Wu1996}, \cite{Wu2011a}, \cite{Wu2011b}, \cite{Wu2012}, \cite{WuPreprint} for details. 

Let $K$ be a hyperbolic arborescent knot in $S^3$. 
Suppose that the manifold $K(r)$ obtained by Dehn surgery on $K$ along a non-trivial slope $r$ 
is non-hyperbolic for some rational number $r$. 
Then $r$ must be an integer except for $r=37/2$ for $P(-2, 3, 7)$. 
The manifold $K(r)$ is always irreducible, and has infinite fundamental group except for 
$r= 17,18,19$ for $P(-2, 3, 7)$ and $r= 22,23$ for $P(-2, 3, 9)$. 
Furthermore the following hold. 
If $K(r)$ is toroidal, then $K(r)$ is not a Seifert fibered, and $K$ is either 
\begin{itemize}
\item
a two bridge knot $K_{[b_1,b_2]}$ with $|b_1|,|b_2| > 2$, 
and $r=0$ if both $b_1 , b_2$ are even, 
$r = 2 b_2$ if $b_1$ is odd and $b_2$ is even, 
\item
a twist knot $K_{[2n,\pm 2]}$ with $|n| > 1$ and $r= 0, \mp 4$, 
\item
one of the Montesinos knots of length 3 with the slope described in Table \ref{Table.Tor}. 
\begin{table}[htb]
  \begin{center}
    \caption{Toroidal surgeries}
    \label{Table.Tor}
    \begin{tabular}{|l|c|}
    \hline
\multicolumn{1}{|c|}{$K$} &  $r$ \\ \hline \hline
$P(q_1,q_2,q_3)$, $q_i$ odd and $| q_i |>1$ & $0$ \\
$P(q_1, q_2, q_3)$, $q_1$ even, $q_2$, $q_3$ odd and $|q_i| > 1$ & $2(q_2 +q_3)$\\
$P(-2, 3, 7)$ & $37/2$\\
$P( - 3 , 3 , 7 )$ & $1$\\
$M( - 1 / 2 , 1 / 3 , 1 / ( 3 + 1 / n ) )$, $n$ even and $n \ne 0$ & $2 - 2 n$\\
$M(-1/2, 1/3, 1/(5+1/n))$, $n$ even and $n\ne 0$ & $1-2n$\\
$M(-1/2, 1/3, 1/(6 + 1/n))$, $n \ne 0, -1$ odd (resp. even) & $16$ (resp. $0$)\\
$M(-1/2, 1/5, 1/(3+1/n))$, $n$ even and $n \ne 0$ & $5-2n$\\
$M( - 1 / 2 , 2 / 5 , 1 / 7 ) $ & $12$\\
$M( - 1 / 2 , 2 / 5 , 1 / 9 )$ & $ 15$ \\
$M(-1/3, -1/(3+1/n), 2/3)$, $n \ne 0,-1$ odd (resp. even) & $-12$ (resp. $4$)\\
$M(-2/3, 1/3, 1/4)$ & $13$\\
$M( - 1 / ( 2 + 1 / n ) , 1 / 3 , 1 / 3 )$, $n$ odd and $n \ne - 1$& $2 n$\\
      \hline	
    \end{tabular}
  \end{center}
\end{table}

\item
$K_1$ with $r=3$, $K_2$ with $r=0$ or $K_3$ with $r= -3$. 
Here $K_1, K_2,K_3$ are defined as follows. 
Let $T(r_1,r_2)$ be the Montesinos tangle obtained as the sum of rational tangles corresponding to $r_1$ and $r_2$. 
Denote by $T(r_1,r_2; n)$ the tangle obtained from $T(r_1,r_2)$ by twisting the two lower endpoints of the strands by $n$ left hand half twists. 
Let $\eta : \mathbb{R}^2 \to \mathbb{R}^2$ be the map which is a $\pi/2$ counter-clockwise rotation about the origin followed by a reflection along the $y$-axis. 
Define three knots $K_1, K_2, K_3$ obtained as 
$(S^3, K_1) = T (1/3, -1/2; 4) \cup_\eta T (1/3, -1/2; 4)$,
$(S^3, K_2) = T (1/3, -1/2; 4)  \cup_\eta T (-1/3, 1/2; -4)$, and 
$(S^3, K_3) = T (-1/3, 1/2; -4)  \cup_\eta  T (-1/3, 1/2; -4)$.
\end{itemize}

If $K(r)$ is small Seifert fibered, then $K$ is either 
\begin{itemize}
\item
the figure-eight knot and $r= \pm 1 , \pm 2 , \pm 3$, 
\item
a twist knot $K_{[2n,\pm 2]}$ with $|n| > 1$ and $r= \mp 1, \mp 2, \mp 3$, 
\item
one of the Montesinos knots of length 3 with the slope described in Table \ref{Table.SF}. 
\end{itemize}

\begin{table}[htb]
  \begin{center}
    \caption{Seifert fibered surgeries}
    \label{Table.SF}
    \begin{tabular}{|l|c|}
    \hline
\multicolumn{1}{|c|}{$K$} &  $r$ \\ \hline \hline
$P(-2,\, 3,\, 2n+1)$, $ n \ne 0,1,2$ & $4n+6$ , $4n+7$ \\
$P(-2,\, 3,\, 7)$ & $17$  \\
$P(-3,\, 3,\, 3)$ & $1$ \\
$P(-3,\, 3,\, 4)$ &$1$ \\
$P(-3,\, 3,\, 5)$ & $1$  \\
$P(-3,\, 3,\, 6)$ & $1$ \\ 
$M(-1/2,\, 1/3,\, 2/5)$ & $3$ , $4$ , $5$\\ 
$M(-1/2,\, 1/3,\, 2/7)$ & $-1$ , $0$ , $1$\\
$M(-1/2,\, 1/3,\, 2/9)$ & $2$ , $3$ , $4$ \\
$M(-1/2,\, 1/3,\, 2/11)$ & $-1$ , $-2$  \\
$M(-1/2,\, 1/5,\, 2/5)$ & $7$ , $8$\\
$M(-1/2,\, 1/7,\, 2/5)$ & $11$ \\
$M(-2/3,\, 1/3,\, 2/5)$ & $-5$ \\
      \hline	
    \end{tabular}
  \end{center}
\end{table}

%


\begin{thebibliography}{99}

\bibitem{Adams2007}
C. Adams, 
{\em Noncompact Fuchsian and quasi-Fuchsian surfaces in hyperbolic 3-manifolds}, 
Algebr. Geom. Topol., {\bf 7}, 565--582, (2007). 

\bibitem{agol_dehn}
I. Agol, 
{\em Bounds on exceptional Dehn filling}, 
Geom. Topol., {\bf 4}, 431--449, (2000).

\bibitem{BoyerZhang1997}
S. Boyer and X. Zhang, 
{\em Cyclic surgery and boundary slopes}, 
Geometric topology (Athens, GA, 1993), 62--79, 
AMS/IP Stud. Adv. Math., 2.1, Amer. Math. Soc., Providence, RI, (1997). 

\bibitem{Brinkmann_etal2005}
G. Brinkmann, S. Greenberg, C. Greenhill, B.D. McKay, R. Thomas, and P Wollan, 
{\em Generation of simple quadrangulations of the sphere}, 
Discrete Math., {\bf 305}(1-3), 33--54 (2005). 

\bibitem{BrinkmannMcKay}
G. Brinkmann and B.D. McKay, 
\texttt{plantri} (software), 
available at \url{http://cs.anu.edu.au/~bdm/plantri}.

\bibitem{Brittenham1993}
M. Brittenham, 
{\em Essential laminations in Seifert-fibered spaces}, 
Topology, {\bf 32}(1), 61--85 (1993). 

\bibitem{BrittenhamWu2001}
M. Brittenham and Y.-Q. Wu, 
{\em The classification of Dehn surgery on 2-bridge knots}, 
Comm. Anal. Geom., {\bf 9}, 97--113, (2001). 

\bibitem{CDW} 
M. Culler, N. M. Dunfield, and J. R. Weeks. 
\texttt{SnapPy}, 
a computer program for studying the geometry and topology of 3-manifolds, 
available at \url{http://snappy.computop.org}.

\bibitem{DelmanRoberts1999}
C. Delman and R. Roberts, 
{\em Alternating knots satisfy strong property P}, 
Comm. Math. Helv., {\bf 74}, 376--397,  (1999).

\bibitem{FKP}
D. Futer, E. Kalfagianni, and\ J. S. Purcell, 
{\em Quasifuchsian state surfaces}, 
Trans. Amer. Math. Soc., {\bf 366}(8), 4323--4343, (2014).

\bibitem{HatcherThurston1985}
A. Hatcher and W. Thurston, 
{\em Incompressible surfaces in 2-bridge knot complements}, 
Inventiones Math., {\bf 79}, 225--246, (1985). 

\bibitem{HW} 
C. Hodgson and J. Weeks, 
{\em Symmetries, isometries and length spectra of closed hyperbolic  three-manifolds},  
Experiment. Math., {\bf 3},  261--274, (1994).

\bibitem{HIKMOT}
N. Hoffman, K. Ichihara, M. Kashiwagi, H. Masai, S. Oishi, and A.~Takayasu, 
{\em Verified computations for hyperbolic 3-manifolds}, 
Experiment. Math., {\bf 25},  66--78, (2016).

\bibitem{hikmot}
N. Hoffman, K. Ichihara, M. Kashiwagi, H. Masai, S. Oishi, and A.~Takayasu, 
\texttt{hikmot}, 
a computer program for Verified computations for hyperbolic 3-manifolds, 
available at \url{http://www.oishi.info.waseda.ac.jp/~takayasu/hikmot/}.

\bibitem{Ichihara2008JKTR}
K. Ichihara,
{\em Integral non-hyperbolike surgeries}, 
J. Knot Theory Ramifications, {\bf 17}(3), 257--261, (2008).

\bibitem{Ichihara2008AGT}
K. Ichihara, 
{\em All exceptional surgeries on alternating knots are integral surgeries}, 
Algebr. Geom. Topol., {\bf 8}, 2161--2173, (2008).

\bibitem{IchiharaJong2009}
K. Ichihara and In Dae Jong,
{\em Cyclic and finite surgeries on Montesinos knots}, 
Algebr. Geom. Topol., {\bf 9}, 731--742, (2009).

\bibitem{IchiharaJong2013}
K. Ichihara\ and\ I. D. Jong, {\em Toroidal Seifert fibered surgeries on alternating knots}, 
Proc. Japan Acad. Ser. A Math. Sci., {\bf 90}(3), 54--56, (2014).

\bibitem{IchiharaJongKabaya2012}
K. Ichihara, I.~D. Jong, and Y. Kabaya, 
{\em Exceptional surgeries on $(-2,p,p)$-pretzel knots}, 
Topology Appl., {\bf 159}(4), 1064--1073, (2012). 

\bibitem{Webpage}
K. Ichihara and H. Masai, 
\texttt{fef.py and twistlink.py's}, 
available at \url{https://researchmap.jp/read0212632/published_papers/17989468?lang=en}.

\bibitem{Kauffman1987}
L.~H. Kauffman, 
State models and the Jones polynomial, 
Topology, {\bf 26}(3), 395--407, (1987). 

\bibitem{KirbyList} 
Rob Kirby ed.,
{\em Problems in low-dimensional topology}, 
AMS/IP Stud. Adv. Math., 2.2, Geometric topology (Athens, GA, 1993), 35--473, Amer. Math. Soc., Providence, RI (1997). 

\bibitem{lack}
M. Lackenby, 
{\em Word hyperbolic Dehn surgery}, 
Invent. Math., {\bf 140}(2), 243--282, (2000).

\bibitem{LackenbyMeyerhoff2013}
M. Lackenby and R. Meyerhoff, 
{\em The maximal number of exceptional Dehn surgeries}, 
Invent. Math., {\bf 191}(2), 341--382, (2013).

\bibitem{LickorichThistlethwaite1988}
W.~B.~R. Lickorish and M.~B. Thistlethwaite, 
{\em Some links with non-trivial polynomials and their crossing-numbers}, 
Comment. Math. Helvetici, {\bf 63}, 527--539, (1988).

\bibitem{MartelliPetronioRoukema}
B. Martelli, C. Petronio, and F. Roukema, 
{\em Exceptional Dehn surgery on the minimally twisted five-chain link}, 
Comm. Anal. Geom., {\bf 22}, 689--735, (2014). 

\bibitem{TSUBAME1}
S. Matsuoka, T. Endo, N. Maruyama, H. Sato, and S. Takizawa, 
{\em The Total Picture of TSUBAME2.0"},
The TSUBAME E-Science Journal, Tokyo Tech. GSIC, Vol. 1, pp. 16-18, (2010).

\bibitem{TSUBAME2}
S. Matsuoka, T. Aoki, T. Endo, H. Sato, S. Takizawa, A. Nomura, and K. Sato, 
{\em TSUBAME2.0: The First Petascale Supercomputer in Japan and the Greatest Production in the World},
Contemporary High Performance Computing: From Petascale toward Exascale Edited by Jeffrey S . Vetter, 
Chapman \& Hall/CRC, Chapter 20. pp.~525--556 (2013).

\bibitem{Meier}
J. Meier, 
{\em Small Seifert fibered surgery on hyperbolic pretzel knots}, 
Algebr. Geom. Topol., {\bf 14}, 439--487, (2014).

\bibitem{Menasco1984}
W. Menasco, 
{\em Closed incompressible surfaces in alternating knot and link complements}, 
Topology, {\bf 23}, 37--44, (1984). 

\bibitem{MenascoThistlethwaite1992}
W. Menasco and M. Thistlethwaite, 
{\em Surfaces with boundary in alternating knot exteriors}, 
J. Reine Angew. Math., {\bf 426}, 47--65, (1992). 

\bibitem{Moore}
R.~E. Moore,
{\em Interval Analysis},
Prentice-Hall, Englewood Cliffs, (1966).

\bibitem{HMoser}
H. Moser, 
{\em Proving a manifold to be hyperbolic once it has been approximated to be so}, 
Algebra. Geom. Topol., {\bf 9}, 103--133, (2009).

\bibitem{Murasugi1958}
K. Murasugi, 
{\em On the Alexander polynomial of the alternating knots},
Osaka Math. J., {\bf 19}, 181--189, (1958).

\bibitem{Murasugi1987}
K. Murasugi, 
{\em Jones polynomials and classical conjectures in knot theory}, 
Topology, {\bf 26}(2), 187--194, (1987). 

\bibitem{Patton1995}
R. Patton, 
{\em Incompressible punctured tori in the complements of alternating knots}, 
Math. Ann., {\bf 301}, 1--22, (1995). 

\bibitem{Perelman1}
G. Perelman, 
{\em The entropy formula for the Ricci flow and its geometric applications}, 
preprint, available at {\tt arXiv:math/0211159}.

\bibitem{Perelman2}
G. Perelman, 
{\em Ricci flow with surgery on three-manifolds}, 
preprint, available at {\tt arXiv:math/0303109}.

\bibitem{Perelman3}
G. Perelman, 
{\em Finite extinction time for the solutions to the Ricci flow on certain three-manifolds}, 
preprint, available at {\tt arXiv:math/0307245 }.

\bibitem{RolfsenBook} 
D. Rolfsen, 
{\em Knots and Links}, 
Publish or Perish, Berkeley, Ca, (1976).


\bibitem{Rump2010}
S.~M. Rump,
{\em Verification methods: Rigorous results using floating-point arithmetic}, 
Acta Numerica, {\bf 19,} 287--449, (2010).

\bibitem{S}
P. Scott, 
{\em The geometries of 3-manifolds}, 
Bull. Lond. Math. Soc., {\bf 15}, 401--487, (1983).

\bibitem{Sunaga1}
T. Sunaga,
{\em Geometry of numerals},
Master's thesis, University of Tokyo, 1956.

\bibitem{Sunaga2}
T. Sunaga,
{\em Theory of an interval algebra and its application to numerical analysis},
RAAG Memoirs {\bf 2}, 29--46, (1958).


\enlargethispage{1.5em}
\bibitem{Thur}
W.~P. Thurston, 
{\em The geometry and topology of three-manifolds}, 
notes, Princeton University, Princeton, 1980; 
available at \url{http://msri.org/publications/books/gt3m}.

\bibitem{Thurston1982}
W.~P. Thurston, 
{\em Three dimensional manifolds, Kleinian groups and hyperbolic geometry}, 
Bull. Amer. Math. Soc., {\bf 6}, 357--381, (1982).  

\bibitem{TSUBAME.web} 
Tokyo Institute of Technology, 
{\em The Global Scientific Information and Computing Center, TSUBAME},
\url{http://tsubame.gsic.titech.ac.jp/en}.


\bibitem{Wu1996}
Y.-Q. Wu, 
{\em Dehn surgery on arborescent knots}, 
J. Diff. Geom., {\bf 43}, 171--197, (1996). 


\bibitem{Wu2011a}
Y.-Q. Wu, 
{\em The classification of toroidal Dehn surgeries on Montesinos knots}, 
Comm. Anal. Geom., {\bf 19}(2), 305--345, (2011). 

\bibitem{Wu2011b}
Y.-Q. Wu, 
{\em Exceptional Dehn surgery on large arborescent knots}, 
Pac. J. Math., {\bf 252}, 219--243, (2011).

\bibitem{Wu2012}
Y.-Q. Wu, 
{\em Persistently laminar branched surfaces}, 
Comm. Anal. Geom., {\bf 20}(2), 397--434, (2012). 

\bibitem{WuPreprint}
Y.-Q. Wu, 
{\em Seifert fibered surgery on Montesinos knots}, 
preprint, available at {\tt arXiv:1207.0154}.

\end{thebibliography}
\end{document}